\newtheorem{thm}{Theorem}
\newtheorem{lem}{Lemma}
\newtheorem{clm}{Claim}
\newtheorem{cor}{Corollary}
\begin{document}

\title[growth Gap Vs. Smoothness]{GROWTH GAP VERSUS SMOOTHNESS FOR DIFFEOMORPHISMS OF THE INTERVAL}
\date{July 20, 2008.}
\author[Lev Buhovski and Roman Muraviev]{Lev Buhovski and Roman Muraviev\\
School of Mathematical Sciences\\
Tel-Aviv University}
\address{School of Mathematical Sciences, Tel-Aviv
  University, Ramat-Aviv, Tel-Aviv 69978, Israel.}
\email{levbuh@post.tau.ac.il, moraviev@post.tau.ac.il. }

\subjclass[2000]{37C05, 37E05.}

\keywords{Growth sequences, growth gap, diffeomorphisms of the
interval.}

\begin{abstract}

Given a diffeomorphism of the interval, consider the uniform norm
of the derivative of its $n$-th iteration. We get a sequence of real
numbers called the growth sequence. Its asymptotic behavior is an
invariant which naturally appears both in smooth dynamics and in
geometry of the diffeomorphisms groups. We find sharp estimates
for the growth sequence of a given diffeomorphism in terms of the
modulus of continuity of its derivative. These estimates extend
previous results of Polterovich-Sodin and Borichev.

\end{abstract}

\maketitle

\section{Introduction and main results} \label{S:intro}

Denote by Diff$_{0}[0,1]$ the group of all $C^1$-smooth
diffeomorphisms  of the interval $[0,1]$ fixing the end points 0
and 1. For any $f \in$Diff$_{0}[0,1]$, we define the growth
sequence of $f$ by $$ \Gamma_{n}(f)= \max \{
\|(f^n)'(x)\|_{\infty}, \|(f^{-n})'(x)\|_{\infty} \},$$ for all $n
\in \mathbb{N},$ where $\|.\|_{\infty}$ stands for the uniform
norm.

We say that a subgroup $G \subseteq $Diff$_{0}[0,1]$ admits a
\textit{growth gap} if there exists a sequence of positive numbers
$\gamma_n(G)$ that grows sub-exponentially to $+\infty$, such that
for any $f \in G$, either $\Gamma_{n}(f)$ tends exponentially to
$+ \infty$, or $\Gamma_{n}(f) \leq C(f) \cdot
\gamma_{n}(G),$ for all $n \in \mathbb{N}.$

From a viewpoint of dynamics, growth sequence of an element reflects
how the length changes asymptotically under iterations. At the same
time, geometrically, growth sequence indicates how an element is
distorted with respect to the multiplicative norm. In $[DG],$
D'Ambra and Gromov suggested to study growth sequences of various
classes of diffeomorphisms.

The growth sequence is always submultiplicative:
$$\Gamma_{m+n}(f) \leq \Gamma_{m}(f) \cdot \Gamma_{n}(f),$$
for all $m,n \in \mathbb{N}$. Therefore, the limit $$ \gamma(f) =
\lim_{n \rightarrow \infty} \sqrt[n]{\Gamma_{n}(f)}$$ always
exists. Using standard arguments of ergodic theory, one can check
that
$$\gamma(f)=1 \ \ {\rm if \ and \ only \ if} \ \  f'(\xi)=1 \ {\rm for \ every} \ \xi \in {\rm Fix}(f),$$
(see $[PS]$, page 199). The following theorem shows that the whole
group Diff$_{0}[0,1]$ does not admit a growth gap (see $[B]$),

\begin{thm} \label{T:Boricev1}
Given any monotone decreasing sequence of positive numbers $\{
\alpha_{n} \}_{n=1}^{\infty}$  tending to $0$, there exists $f
\in$Diff$_{0}[0,1]$ such that $Fix(f)=\{0,1\}$, $\gamma(f)=1$ and
\[
\Gamma_{n}(f) \geq e^{\alpha_{n} \cdot n}
\]
for all $n \in \mathbb{N}.$
\end{thm}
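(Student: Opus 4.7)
The plan is to realise $f$ as a smooth conjugate of the unit translation $T(t)=t+1$ on $\mathbb R$. For any smooth increasing diffeomorphism $\Phi:\mathbb R\to(0,1)$, setting $f=\Phi\circ T\circ\Phi^{-1}$ on $(0,1)$ and $f(0)=0$, $f(1)=1$ gives a homeomorphism of $[0,1]$ with no fixed points in $(0,1)$, so $\mathrm{Fix}(f)=\{0,1\}$ automatically. At the point $x=\Phi(t)$ one has $(f^{\pm n})'(x)=\Phi'(t\pm n)/\Phi'(t)$, so
\[
\Gamma_{n}(f)\;=\;\exp\Bigl(\sup_{t\in\mathbb R}\bigl|\log\Phi'(t+n)-\log\Phi'(t)\bigr|\Bigr),
\]
and the whole task reduces to choosing the density $\rho:=\Phi'$.

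Given $\alpha_n\searrow 0$, first regularise the sequence. Set $\tilde\alpha_n:=\max(\alpha_n,n^{-1/2})$, still monotone and tending to $0$, and define the running-maximum envelope
\[
h(n)\;:=\;\max_{1\le k\le n}k\tilde\alpha_k.
\]
Then $h$ is nondecreasing, $h(n)\ge n\alpha_n$, $h(n)\ge\sqrt n$ (which will ensure summability of $\rho$), and $h(n)/n\to 0$. The sequence $k\tilde\alpha_k$ is subadditive, since $\tilde\alpha$ decreasing gives $(m+n)\tilde\alpha_{m+n}\le m\tilde\alpha_m+n\tilde\alpha_n$; its running maximum $h$ inherits subadditivity. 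A direct case-check (split according to whether the maximum defining $h(n+1)$ is attained at $k=n+1$ or not) yields the increment bound $h(n+1)-h(n)\le\tilde\alpha_{n+1}\to 0$, which will ensure $C^1$-smoothness at the endpoints. Extend $h$ to a smooth, nondecreasing, subadditive function on $[0,\infty)$ with $h(0)=0$ and $h'(t)\to 0$ at infinity by linear interpolation followed by a local mollification.

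Now set $\rho(t):=C\,e^{-h(|t|)}$, with $|t|$ replaced by $\sqrt{t^{2}+\varepsilon^{2}}$ near $0$ to smooth the corner, and $C$ chosen so that $\int_{\mathbb R}\rho=1$ (finite because $h(|t|)\ge\sqrt{|t|}$); put $\Phi(t)=\int_{-\infty}^{t}\rho$. The estimate $h(|t|)-h(|t+1|)\to 0$ at infinity forces $\rho(t+1)/\rho(t)\to 1$, so $f=\Phi\circ T\circ\Phi^{-1}$ extends to a $C^{1}$-diffeomorphism of $[0,1]$ with $f'(0)=f'(1)=1$. From subadditivity of $h$ and the triangle inequality $|t|\le|t+n|+n$ we get
\[
h(|t|)-h(|t+n|)\;\le\;h(n),
\]
with equality realised at $t=-n$. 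Hence $\Gamma_{n}(f)=e^{h(n)}\ge e^{n\alpha_{n}}$, while $\log\Gamma_{n}(f)/n=h(n)/n\to 0$ yields $\gamma(f)=1$.

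The one substantive obstacle is preserving $C^{1}$-smoothness at the endpoints: the density $\rho$ degenerates to $0$ at $\pm\infty$, so the existence of $f'(0)$ and $f'(1)$ is a non-trivial assertion about the finite differences of $h$. This is precisely the reason for passing from $\alpha_n$ to the envelope $h$: the increment bound $h(n+1)-h(n)\to 0$ is exactly what is needed there, and the very same bound (through subadditivity) delivers the sub-exponential upper estimate on $\Gamma_{n}(f)$.
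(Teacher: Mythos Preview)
The paper does not actually prove Theorem~1; it is quoted from Borichev~[B], so there is no in-paper argument to match against. Your conjugation scheme $f=\Phi\circ T\circ\Phi^{-1}$ with density $\rho=\Phi'=Ce^{-h(|t|)}$ is a correct and elegant route: the running-maximum envelope $h(n)=\max_{k\le n}k\tilde\alpha_k$ is nondecreasing, dominates $n\alpha_n$, has $h(n)/n\to0$, is subadditive (so $\sup_t|h(|t|)-h(|t+n|)|=h(n)$ gives $\Gamma_n(f)=e^{h(n)}$ on the nose), and has increments tending to zero (which is exactly what forces $f'\to1$ at the endpoints and hence $f\in\mathrm{Diff}_0[0,1]$). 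For contrast, Borichev's own construction---visible in this paper only through the proof of Theorem~7---works directly on $[0,1]$ by writing down $\phi(x)=x-f(x)$ explicitly, with a leading term making $1/k\mapsto1/(k{+}1)$ and an oscillatory correction producing the growth; that format is tailored to controlling $\omega_{f'}$, which is irrelevant for the bare $C^1$ statement here, so your approach is arguably the cleaner one for Theorem~1.

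Two places in the sketch deserve one more sentence each. First, mollification will not preserve \emph{exact} subadditivity of $h$; but you only need $h(s+n)\le h(s)+h(n)$ for real $s\ge0$ and \emph{integer} $n\ge1$, and this survives linear interpolation verbatim (write $s=m+\theta$ and use subadditivity of the integer sequence at $m$ and $m{+}1$), while the corner-smoothing at $t=0$ introduces only a bounded additive error that can be absorbed by first replacing $\alpha_n$ with, say, $\alpha_n+1/n$. Second, the conclusion is required for \emph{all} $n\ge1$, not just large $n$; the same absorption trick, or simply enlarging $\tilde\alpha_n$ on a finite initial segment, covers this.
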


As it is shown in Theorem ~\ref{T:Boricev1}, weakening  of
smoothness assumptions leaves more room for exponential growth,
i.e., the growth sequence $\Gamma_{n}(f)$ becomes bigger, or in
other words, "the growth gap" is smaller. Therefore, smaller
subgroups of Diff$_{0}[0,1]$ should be considered in order to
discover a growth gap. In $[{\rm PS}]$ a growth gap was found for
the subgroup of $C^2$-diffeomorphisms of Diff$_{0}[0,1]$. Namely,

\begin{thm} \label{T:Polterovich Sodin}
 Let $f \in$Diff$_{0}[0,1]$ be a $C^2$-diffeomorphism with $\gamma(f)=1$.
Then
\[
\Gamma_{n}(f) \leq C(f) \cdot n^2 ,
\]
for all $n \in \mathbb{N}.$
\end{thm}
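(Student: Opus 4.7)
The plan is to reduce to an interval $(a,b) \subset [0,1]$ between two consecutive fixed points of $f$ on which, by symmetry, we may assume $f(x) > x$, and bound $\|(f^n)'\|_\infty$ there; the analogous bound for $\|(f^{-n})'\|_\infty$ will follow by applying the same argument to $f^{-1}$. Set $\phi(x) = f(x) - x$, a $C^2$ function strictly positive on $(a,b)$ with $\phi(a) = \phi(b) = 0$. Since $\gamma(f) = 1$ forces $f'(a) = f'(b) = 1$, we also have $\phi'(a) = \phi'(b) = 0$. The $C^2$ hypothesis gives $|\phi''| \leq L$ uniformly, and combined with $\phi \geq 0$ this yields, via Glaeser's inequality (apply Taylor at $x_0$ and demand that the resulting quadratic upper bound on $\phi$ be nonnegative), the pointwise estimate $\phi'(x)^2 \leq 2L\,\phi(x)$ on $(a,b)$. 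This is the key analytic input converting $C^2$-regularity into dynamical information.

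The first step is to prove the comparison
\[
(f^n)'(x_0) \leq C_1(f) \cdot \frac{\phi(x_n)}{\phi(x_0)}, \qquad x_k := f^k(x_0).
\]
Applying the Mean Value Theorem to $f$ on $[x_k, x_{k+1}]$ and using $f(x_k) = x_{k+1}$, $f(x_{k+1}) = x_{k+2}$, one finds $\xi_k \in (x_k, x_{k+1})$ with $f'(\xi_k) = \phi(x_{k+1})/\phi(x_k)$; hence $\prod_k f'(\xi_k) = \phi(x_n)/\phi(x_0)$. Since $|f''|$ is bounded and $|x_k - \xi_k| \leq \phi(x_k)$, each ratio $f'(x_k)/f'(\xi_k)$ is $1 + O(\phi(x_k))$, and the product of these is at most $\exp\bigl(C \sum_k \phi(x_k)\bigr) \leq \exp(C(b-a))$ by the telescoping $\sum_k \phi(x_k) = x_n - x_0$.

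The heart of the argument is then to show $\phi(x_n)/\phi(x_0) \leq C_2(f)\, n^2$. Introduce $\sigma(x) := \phi(x)^{-1/2}$; using $\phi(f(x)) = \phi(x)\bigl(1 + \phi'(x) + O(\phi(x))\bigr)$ and expanding the inverse square root,
\[
\sigma(f(x)) - \sigma(x) = -\frac{\phi'(x)}{2\sqrt{\phi(x)}} + O\bigl(\sqrt{\phi(x)}\bigr).
\]
Glaeser's inequality bounds the first term in absolute value by $\sqrt{L/2}$, and the second is controlled by $O(\sqrt{\max \phi})$, so $|\sigma(f(x)) - \sigma(x)| \leq C_2(f)$ uniformly on $(a,b)$. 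Iterating, $|\sigma(x_n) - \sigma(x_0)| \leq C_2\, n$; in the non-trivial case $\phi(x_n) \geq \phi(x_0)$, squaring $\phi(x_0)^{-1/2} \leq \phi(x_n)^{-1/2} + C_2 n$ and multiplying by $\phi(x_n)$ yields $\phi(x_n)/\phi(x_0) \leq 1 + 2C_2 n \sqrt{\phi(x_n)} + C_2^2 n^2 \phi(x_n) = O(n^2)$.

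Combining the two estimates delivers $(f^n)'(x_0) \leq C(f)\, n^2$ pointwise, and taking suprema over $x_0$ and over the (finitely or infinitely many) intervals between consecutive fixed points yields the theorem. The main obstacle I anticipate is identifying the auxiliary function $\sigma = \phi^{-1/2}$ and recognizing that Glaeser's inequality is precisely what is needed to make the increment $\sigma(f(x)) - \sigma(x)$ bounded: a naive term-by-term estimation of $\log(f^n)'(x_0) = \sum \log f'(x_k)$ cannot succeed because the individual summands are too large when $x_k$ sits near a fixed point, but passing to $\sigma$ converts the parabolic nature of the fixed points into bounded arithmetic increments, from which the quadratic growth follows automatically.
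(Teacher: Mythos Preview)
Your proof is correct, but it follows a different route from the paper's. The paper does not prove Theorem~2 directly; it obtains it by substituting $\omega(\delta)=\delta$ into the general estimate of Theorem~4 for ${\rm Diff}^{\omega}_0[0,1]$. Both arguments share the first reduction: the comparison $(f^n)'(x_0)\asymp \phi(x_n)/\phi(x_0)$ is your MVT step and is exactly Lemma~3 of the paper (with $n\omega(1/n)=1$ when $\omega(\delta)=\delta$). The difference lies in how the ratio $\phi(x_n)/\phi(x_0)$ is bounded. You invoke Glaeser's inequality $\phi'^2\le 2L\phi$ (legitimate on each interval $[a,b]$ because $\phi'(a)=\phi'(b)=0$, which keeps the optimal Taylor increment inside $[a,b]$) and then show that $\sigma=\phi^{-1/2}$ has uniformly bounded increments along the orbit, whence $\phi(x_n)/\phi(x_0)=O(n^2)$. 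The paper instead introduces $\Omega$, the inverse of $x\mapsto x\omega(x)$ (so $\Omega(z)=\sqrt{z}$ here), proves a modulus-of-continuity analogue of Glaeser (Claim~2(a): $|\phi'|\le 3\,\omega(\Omega(\phi))$, which for $\omega(\delta)=\delta$ reads $|\phi'|\le 3\sqrt{\phi}$), and combines it with the integral estimate $\int_{x_1}^{x_{n+1}}dt/\phi(t)\asymp n$ (Claim~1) and Claim~3 to get $1/\phi\lesssim n/\omega^{-1}(1/n)=n^2$. Your approach is cleaner and more transparent for the $C^2$ case, exposing directly how the parabolic nature of the fixed points turns into arithmetic growth of $\phi^{-1/2}$; the paper's machinery is heavier but is what is needed to handle an arbitrary modulus of continuity.
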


This result leads to a natural question on the growth gap for
subgroups of Diff$_{0}[0,1]$ with intermediate smoothness rate
between $C^1$ and $C^2$. A partial answer is provided in $[{\rm
B}]$. To introduce this result, we consider the following subgroup
of Diff$_{0}[0,1]$ which is associated with the H\"{o}lder
condition, H$_{\alpha}[0,1]= \{f \in$ Diff$_{0}[0,1] : |f'(x)-f'(y)|
\leq C(f) \cdot |x-y|^{\alpha} \}$, for $0<\alpha<1$.

\begin{thm} \label{T:Boricev2}
If $f \in H_{\alpha}[0,1]$ with $\gamma(f)=1$, then \\
$$\log\Gamma_{n}(f) \leq C(f, \alpha) \cdot n^{1-\alpha},$$
for all $n \in \mathbb{N}.$
\end{thm}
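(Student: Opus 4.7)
The plan is to follow the Polterovich-Sodin style reduction, derive a telescoping estimate via the mean value theorem, and supplement it with an auxiliary Hölder-type inequality for $\phi=f-\mathrm{id}$ that tames the troublesome boundary term. First I would reduce to the single-interval case: on each connected component of $[0,1]\setminus\mathrm{Fix}(f)$, $f-\mathrm{id}$ has constant sign, and up to replacing $f$ by $f^{-1}$ I may assume $\mathrm{Fix}(f)=\{0,1\}$ with $f(x)>x$ on $(0,1)$, so that $f'(0)=f'(1)=1$. Fix $x_0\in(0,1)$ and set $x_k=f^k(x_0)$ (a strictly increasing orbit converging to $1$) and $\phi(x)=f(x)-x>0$. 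By the mean value theorem $x_{k+1}-x_k=f'(\eta_k)(x_k-x_{k-1})$ for some $\eta_k\in(x_{k-1},x_k)$, so the product $\prod_{k=1}^{n-1}f'(\eta_k)$ telescopes to $(x_n-x_{n-1})/(x_1-x_0)\le 1/\phi(x_0)$. Since $f'$ is bounded away from $0$, the Hölder hypothesis gives $|\log f'(x_k)-\log f'(\eta_k)|\le C(x_k-x_{k-1})^\alpha$, and Hölder's inequality applied to $\sum 1\cdot(x_k-x_{k-1})^\alpha$ yields $\sum_{k=1}^{n-1}(x_k-x_{k-1})^\alpha\le n^{1-\alpha}$, using $\sum(x_k-x_{k-1})\le 1$. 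Combining, I obtain the main telescoping inequality
\[
\log (f^n)'(x_0)\;\le\; -\log\phi(x_0)+Cn^{1-\alpha}+O(1).
\]

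This estimate is sharp enough when $\phi(x_0)$ is not too small, but it is useless near orbits that approach zeros of $\phi$. To handle that regime I would next derive the a priori Hölder-type bound $|\phi'(x)|\le C\phi(x)^{\alpha/(1+\alpha)}$ on $(0,1)$: because $\phi'$ is $\alpha$-Hölder it stays within a factor $\tfrac12$ of $\phi'(x)$ on an interval around $x$ of radius $h_0=(|\phi'(x)|/2C)^{1/\alpha}$, and integrating $\phi'$ over this interval while using $\phi\ge 0$ forces $\phi(x)\ge c|\phi'(x)|^{1+1/\alpha}$. Since $\log f'(x)\le\phi'(x)$ (as $\log(1+u)\le u$), this yields the pointwise estimate $\log f'(x)\le C\phi(x)^{\alpha/(1+\alpha)}$.

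Then I would split the orbit according to its first exit from $E_n=\{x:\phi(x)\le n^{-(1+\alpha)}\}$. Let $m$ be the first $k\le n$ with $x_k\notin E_n$, or $m=n$ otherwise. If $m=n$ then $\log f'(x_k)\le C\phi(x_k)^{\alpha/(1+\alpha)}\le Cn^{-\alpha}$ for every $k<n$, and summing gives $\log(f^n)'(x_0)\le Cn^{1-\alpha}$ directly. If $m<n$ I factor $(f^n)'(x_0)=(f^m)'(x_0)\cdot(f^{n-m})'(x_m)$: the first factor is controlled by the preceding case, while the main telescoping inequality applied at $x_m$ — where $\phi(x_m)>n^{-(1+\alpha)}$ forces $-\log\phi(x_m)<(1+\alpha)\log n$ — yields $\log(f^{n-m})'(x_m)\le(1+\alpha)\log n+Cn^{1-\alpha}+O(1)$. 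Both contributions are $O(n^{1-\alpha})$, so $\log(f^n)'(x_0)\le C(f,\alpha)n^{1-\alpha}$ uniformly in $x_0\in(0,1)$. Running the same argument for $f^{-1}$ (which also lies in $H_\alpha$ with $\gamma=1$) bounds $\log(f^{-n})'$ analogously, and together these give $\log\Gamma_n(f)\le C(f,\alpha)n^{1-\alpha}$.

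The main obstacle is the coordination between the telescoping estimate, which is fatally loose at orbits lingering near near-fixed points of $f$, and the a priori inequality $|\phi'|\le C\phi^{\alpha/(1+\alpha)}$, which shows that $f'\approx 1$ precisely in that regime; the threshold $n^{-(1+\alpha)}$ defining the splitting set $E_n$ is chosen to balance the two bounds, making the direct summation inside $E_n$ contribute exactly $n^{1-\alpha}$ while keeping the $-\log\phi$ term outside $E_n$ at the logarithmic scale $O(\log n)=o(n^{1-\alpha})$.
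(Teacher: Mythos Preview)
Your argument is correct. The paper does not prove Theorem~\ref{T:Boricev2} directly; it cites it from Borichev and then recovers it by substituting $\omega(\delta)=\delta^{\alpha}$ into the general Theorem~\ref{T:generalization}. So the natural comparison is between your proof and the proof of Theorem~\ref{T:generalization} specialized to $\omega(x)=x^\alpha$.

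The two share the same skeleton. Your a priori bound $|\phi'(x)|\le C\,\phi(x)^{\alpha/(1+\alpha)}$ is exactly the paper's Claim~2 in this case: there $\Omega$ inverts $x\mapsto x\,\omega(x)=x^{1+\alpha}$, so $\omega(\Omega(\phi(x)))=\phi(x)^{\alpha/(1+\alpha)}$. Your telescoping estimate $\log(f^n)'(x_0)\le \log\bigl(\phi(x_{n-1})/\phi(x_0)\bigr)+Cn^{1-\alpha}$ is the content of the paper's Lemma~3 (the Jensen step there is your H\"older inequality $\sum(x_k-x_{k-1})^\alpha\le n^{1-\alpha}$). Where you genuinely diverge is in taming the boundary term $-\log\phi$: the paper controls $|\log(\phi(x_{n+1})/\phi(x_1))|$ via the integral estimate $\int_{x_1}^{x_{n+1}}dt/\phi(t)\asymp n$ together with a case analysis on whether $x_{n+1}\in I_{x_1}$ (Claims~1,~3 and Lemma~2), obtaining the single term $\log\bigl(n/\omega^{-1}(1/n)\bigr)\asymp\log n$. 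You instead split the orbit at its first exit from $E_n=\{\phi\le n^{-(1+\alpha)}\}$, summing the pointwise bound $\log f'\le C\phi^{\alpha/(1+\alpha)}\le Cn^{-\alpha}$ before the exit and invoking the telescoping estimate after it with $-\log\phi(x_m)\le(1+\alpha)\log n$. Your route is more elementary for the H\"older case (no $\Omega$, no integral comparison), while the paper's organization is what allows the argument to go through for an arbitrary modulus of continuity.

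One cosmetic point: the reduction ``up to replacing $f$ by $f^{-1}$ I may assume $\mathrm{Fix}(f)=\{0,1\}$ with $f(x)>x$'' is slightly glib. You are really working on a single component $(a,b)$ of $[0,1]\setminus\mathrm{Fix}(f)$; the constants in both the H\"older hypothesis and the a priori bound are uniform across components because $\gamma(f)=1$ forces $\phi'=0$ at every fixed point, so your integration argument for $|\phi'|\le C\phi^{\alpha/(1+\alpha)}$ never runs past an endpoint. It is worth saying this explicitly.
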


 In the present work we obtain
a growth gap for the following intermediate subgroups of
diffeomorphisms: \\

\textit{Case (a)}: Subgroups between $C^2[0,1] \bigcap$Diff$_{0}[0,1]$
and $\cap_{0<\alpha<1}H_{\alpha}[0,1].$

\textit{Case (b)}: Subgroups between Diff$_{0}[0,1]$ and
 $\cup_{0<\alpha<1}H_{\alpha}[0,1].$ \\

To describe subgroups of smoothness between $C^1$ and $C^2$, we
use the terminology of moduli of continuity, i.e., non-decreasing
continuous functions $\omega:[0,1] \rightarrow \mathbb{R}$
satisfying $\omega(0)=0$ and $\omega(\delta_1+\delta_2) \leq
\omega(\delta_1)+\omega(\delta_2).$ Given a modulus of continuity
$\omega:[0,1]\rightarrow \mathbb{R}_{+}$, we consider the subgroup
$${\rm Diff}^{\omega}_0 [0, 1]=\{ f\in {\rm Diff}_{0}[0,1]:
\omega_{f'}(\delta) \leq C(f) \cdot \omega(\delta) \},$$ where
$\omega_{f'}(\delta)=\max_{|x-y| \leq \delta }|f'(x)-f'(y)|.$
\\

It is not hard to check that ${\rm Diff}^{\omega}_0 [0, 1]$ is a
non-empty subgroup. Indeed, the identity map is an element of
${\rm Diff}^{\omega}_0 [0, 1]$. Furthermore, for any two $f, g \in
{\rm
Diff}^{\omega}_0 [0, 1],$ \\
$$ |(f \circ g)'(x)-(f \circ g)'(y)| $$
$$\leq |f'(g(x))g'(x) - f'(g(x))g'(y) | + | f'(g(x))g'(y) - f'(g(y))g'(y) |$$
$$\leq A(f) \cdot |g'(x)-g'(y)| + B(g) \cdot |f'(g(x)) - f'(g(y))| \leq C(f,g) \cdot \omega(|x-y|).$$

$$|(f^{-1})'(x)- (f^{-1})'(y)| = |\frac{ f'(f^{-1}(x)) - f'(f^{-1}(y)) }{f'(f^{-1}(x)) \cdot f'(f^{-1}(y))}| $$ $$\leq |\frac{ f'(f^{-1}(x)) -
f'(f^{-1}(y))}{a(f)^2}| \leq B(f) \cdot
\omega(|f^{-1}(x)-f^{-1}(y)|) $$ $$\leq C(f) \cdot \omega(|x-y|).$$
\\ Our first result generalizes Theorems \ref{T:Polterovich Sodin}
and \ref{T:Boricev2} and provides a growth gap for \textit{case (a)}.

\begin{thm} \label{T:generalization}
 Let $\omega(x):[0,1] \rightarrow \mathbb{R}_{+}$ be a
strictly increasing modulus
of continuity. Then, for each ${f \in {\rm Diff}}^{\omega}_0 [0, 1],$ such that $\gamma(f)=1$, we have \\
\[
(*) \ \ \  \log{\Gamma_{n}(f)} \leq
\log{\frac{n}{\omega^{-1}(\frac{2}{n})}} +  C(f) n
\omega(\frac{1}{n}).
\]
Here we denote by $\omega^{-1}$ the inverse function to $\omega.$
\end{thm}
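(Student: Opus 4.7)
The plan is to reduce to a model case and apply a mean-value telescoping to $g=f-\mathrm{id}$.

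\smallskip

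\emph{Reduction.} Between consecutive fixed points of $f$, $g$ has constant sign. Rescaling each such maximal interval affinely to $[0,1]$ preserves the $\mathrm{Diff}^\omega_0$ class up to a constant absorbed into $C(f)$; on intervals where $g<0$ I pass to $f^{-1}\in\mathrm{Diff}^\omega_0[0,1]$ to reduce the estimate on $(f^{-n})'$ to the analogous one for $(f^{-1})^n$. Thus I may assume $\mathrm{Fix}(f)=\{0,1\}$, $f(y)>y$ on $(0,1)$, and need only bound $\|(f^n)'\|_\infty$.

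\smallskip

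\emph{Telescoping and error control.} Fix $x_0\in(0,1)$, set $x_k=f^k(x_0)$. The MVT applied to $g$ on $[x_k,x_{k+1}]$ gives $\eta_k\in(x_k,x_{k+1})$ with $g(x_{k+1})-g(x_k)=g'(\eta_k)\,g(x_k)=(f'(\eta_k)-1)g(x_k)$, so $g(x_{k+1})=g(x_k)f'(\eta_k)$ and $\prod_{k=0}^{n-1}f'(\eta_k)=g(x_n)/g(x_0)$. Combined with $(f^n)'(x_0)=\prod_{k=0}^{n-1}f'(x_k)$ this yields
\[
(f^n)'(x_0)=\frac{g(x_n)}{g(x_0)}\cdot\exp\!\Bigl(\sum_{k=0}^{n-1}\log\tfrac{f'(x_k)}{f'(\eta_k)}\Bigr).
\]
Since $|x_k-\eta_k|\le g(x_k)$ and $f'$ is bounded below, $|\log(f'(x_k)/f'(\eta_k))|\le C(f)\,\omega(g(x_k))$. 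Subadditivity of $\omega$ yields $\omega(a)\le(1+an)\omega(1/n)$; combined with $\sum_k g(x_k)=x_n-x_0\le 1$ this gives $\sum_{k=0}^{n-1}\omega(g(x_k))\le 2n\omega(1/n)$, so the error factor is at most $\exp(C(f)n\omega(1/n))$, which supplies the second summand in $(*)$.

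\smallskip

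\emph{Main step (principal obstacle).} What remains is to prove $g(x_n)/g(x_0)\le C(f)\cdot n/\omega^{-1}(2/n)$. Writing $\delta=\omega^{-1}(2/n)$ so that $\omega(\delta)=2/n$, I split the product $\prod_{k=0}^{n-1}f'(\eta_k)$ according to the location of $\eta_k$. For $\eta_k\in[0,\delta]\cup[1-\delta,1]$ one has $f'(\eta_k)\le 1+C\omega(\delta)=1+2C/n$, so the combined contribution across all $\le n$ such indices is at most $e^{2C}$. The remaining indices telescope to $g(x_{k_1})/g(x_{k_0})$, where $k_0\le k_1$ are the first and last indices for which $x_k\in(\delta,1-\delta)$, and this ratio is bounded by $g_{\max}/g(x_{k_0})$. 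The delicate quantitative input, and where I expect the real difficulty, is the matching lower bound $g(x_{k_0})\gtrsim\delta/n$: the upper estimate $g(\delta)\le C\delta\omega(\delta)=2C\delta/n$ is immediate from $g'(0)=0$ and the $\omega$-modulus of $g'=f'-1$, but securing the lower bound requires either a non-degeneracy observation at the fixed point or, in the degenerate case, arguing that the orbit cannot leave $[0,\delta]$ within $n$ steps so that $(f^n)'(x_0)\le e^{2C}$ is bounded directly. Combining the three contributions gives $g(x_n)/g(x_0)\le C(f)\cdot n/\omega^{-1}(2/n)$ and completes $(*)$.
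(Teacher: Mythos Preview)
Your telescoping and error-control steps are correct and essentially coincide with the paper's Lemma~3: the identity $g(x_{k+1})=g(x_k)f'(\eta_k)$ is the MVT version of the paper's integral comparison, and your subadditivity bound $\sum_k\omega(g(x_k))\le 2n\,\omega(1/n)$ plays the role of the paper's Jensen step. This part is fine.

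The genuine gap is in your ``main step''. The inequality you need, $g(x_{k_0})\gtrsim \delta/n$ with $\delta=\omega^{-1}(2/n)$, is \emph{false in general}: if $x_0$ already lies in $(\delta,1-\delta)$ then $k_0=0$, and nothing prevents $g(x_0)$ from being arbitrarily small (there is no near-fixed-point constraint at interior points). Your proposed dichotomy does not rescue this: the ``degenerate'' branch (orbit confined to $[0,\delta]$) is simply not the case at hand, and the ``non-degeneracy observation at the fixed point'' cannot help when the small value of $g$ occurs far from $0$ and $1$. The scale $\delta$ is fixed in advance and is not adapted to where $g$ happens to be small.

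What the paper does instead is introduce the \emph{adaptive} scale $\Omega(\phi(x))$, where $\Omega$ is the inverse of $t\mapsto t\,\omega(t)$ and $\phi=g$. The key lemma (Claim~2) is a bounded-distortion statement: on $I_x=[x-\Omega(\phi(x)),\,x+\Omega(\phi(x))]$ one has $\phi\le 2.5\,\phi(x)$ and $|\phi'|\le 3\,\omega(\Omega(\phi(x)))$. Combined with the elementary integral estimate $a\,n\le \int_{x_1}^{x_{n+1}}dt/\phi(t)\le A\,n$ (Claim~1), this yields a clean dichotomy: either $x_{n+1}\in I_{x_1}$ (or the symmetric inclusion), in which case the ratio $\phi(x_{n+1})/\phi(x_1)$ is bounded by $2.5$; or the orbit traverses a full half of $I_{x}$ for $x\in\{x_1,x_{n+1}\}$ with the smaller $\phi$-value, forcing $n\gtrsim \Omega(\phi(x))/\phi(x)$. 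The short algebraic Claim~3 then converts $n\gtrsim \Omega(z)/z$ into $1/z\lesssim n/\omega^{-1}(1/n)$, which is exactly the first summand of~$(*)$. Your fixed boundary layer $[0,\delta]\cup[1-\delta,1]$ should be replaced by this $\Omega$-based neighborhood of the point where $\phi$ is smallest; once you do that, your outline goes through.
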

One can substitute $\omega(\delta)=\delta$ and
$\omega(\delta)=\delta^{\alpha}$ into Theorem
~\ref{T:generalization} for achieving Theorems ~\ref{T:Polterovich
Sodin} and ~\ref{T:Boricev2}. In the following corollary, we
consider two toy models related to \textit{case (a)} in order to test how
Theorem 4 provides a growth gap. In the case when the modulus of
continuity $\omega(\delta)$ is close to the identity, the second
term on the right hand side of $(*)$ can be absorbed into the
first one. Namely,

\begin{cor} \label{C:Partial cases}
 (1) If $$\limsup_{x \rightarrow 0}
\frac{\omega(x)}{x \cdot \log{\frac{e}{x}}} < +\infty,$$ then

$$ \log{\Gamma_{n}(f)} \leq  C(f, \omega) \cdot \log{\frac{n}{\omega^{-1}(\frac{2}{n})}}. $$

(2) If
$$\lim_{x \rightarrow 0}
\frac{\omega(x)}{x \cdot \log{\frac{e}{x}}} =0,$$ then

$$ \log{\Gamma_{n}(f)} \leq  (1+o(1)) \cdot
\log{\frac{n}{\omega^{-1}(\frac{2}{n})}}.
$$ \\
\end{cor}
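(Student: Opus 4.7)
\textbf{Proof plan for Corollary~\ref{C:Partial cases}.} The plan is to take the estimate
\[
\log \Gamma_n(f) \;\le\; \log\frac{n}{\omega^{-1}(2/n)} + C(f)\, n\, \omega(1/n)
\]
supplied by Theorem~\ref{T:generalization} and show that, under each of the hypotheses on $\omega$, the second summand $C(f)\,n\,\omega(1/n)$ is absorbed into the first summand $\log \bigl(n/\omega^{-1}(2/n)\bigr)$. Everything then reduces to elementary estimates; no further dynamics is needed.

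First I would record the trivial lower bound $\log\!\bigl(n/\omega^{-1}(2/n)\bigr) \ge \log n$ valid for all sufficiently large $n$, which comes from the observation that $\omega^{-1}(2/n) \to 0$ as $n\to\infty$, so $\omega^{-1}(2/n) \le 1$ for $n$ large. This will be the single point where I use that $\omega$ is a modulus of continuity with $\omega(0)=0$.

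For part~(1), the hypothesis $\limsup_{x\to 0}\omega(x)/(x \log(e/x)) < +\infty$ gives a constant $K = K(\omega)$ with $\omega(1/n) \le K(1/n)\log(en)$ for all large $n$. Hence $n\,\omega(1/n) \le K\log(en) \le 2K \log n$ for $n$ large, and combining with the lower bound above yields $n\,\omega(1/n) \le 2K \log\bigl(n/\omega^{-1}(2/n)\bigr)$. Plugging this back into the estimate from Theorem~\ref{T:generalization} produces a bound of the form $(1 + 2K C(f))\log\bigl(n/\omega^{-1}(2/n)\bigr)$, which is exactly the required assertion.

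For part~(2), the stronger hypothesis $\omega(x)/(x\log(e/x)) \to 0$ gives $n\,\omega(1/n) = o(\log n)$. Combined once more with $\log n \le \log\bigl(n/\omega^{-1}(2/n)\bigr)$, we obtain $n\,\omega(1/n) = o\!\bigl(\log(n/\omega^{-1}(2/n))\bigr)$, and substituting into Theorem~\ref{T:generalization} yields the claimed $(1+o(1))$ bound. The only mildly delicate point is handling small $n$ (where $\omega^{-1}(2/n)$ may not be defined or where $\omega^{-1}(2/n)>1$), but this is easily absorbed into the constant $C(f,\omega)$ in part~(1) and into the $o(1)$ term in part~(2); there is no substantial obstacle.
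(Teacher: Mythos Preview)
Your proposal is correct and follows exactly the approach the paper intends: the paper itself says only that ``the proofs easily follow by substituting the relevant assumptions into Theorem~\ref{T:generalization},'' and your argument spells out precisely that substitution, using the elementary lower bound $\log\bigl(n/\omega^{-1}(2/n)\bigr)\ge \log n$ to absorb the term $C(f)\,n\,\omega(1/n)$ into the first summand.
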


The proofs easily follow by substituting the relevant assumptions
into Theorem~\ref{T:generalization}.

The drawback of Theorem~\ref{T:generalization} is that it does not
provide a growth gap for \textit{case (b)}. For instance, if we consider a
diffeomorphism $f(x)$ from \textit{case (b)} with $\omega_{f'}(\delta) \leq
\frac{1}{\log{\frac{e}{\delta}}}$, then an attempt to apply
Theorem~\ref{T:generalization} for this diffeomorphism yields only
a trivial estimate
$$\log{\Gamma_{n}(f)} \leq C(f) \cdot n.$$
Our second theorem mends this disadvantage. It shows that in \textit{case
(b)} (under additional regularity assumption imposed on $\omega$)
one can discard the first term on the right hand side of $(*):$

\begin{thm} \label{T:low smoothness}
 Let $\omega(x):[0,1] \rightarrow
\mathbb{R}_{+}$ be a modulus of continuity such that for some
$0<\alpha<1$, $\frac{\omega(x)}{x^{\alpha}}$ is a decreasing
function on $(0,a(\alpha))$,  where $0<a(\alpha)<1$. Then for $f
\in$ \rm Diff$^{\omega}_{0}[0,1]
 ,$ such that $\gamma(f)=1$, we have \\
\[
\log{\Gamma_{n}(f)} \leq C(f) \cdot n \omega(\frac{1}{n}).
\]
\end{thm}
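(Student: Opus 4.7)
The plan is to follow the strategy of the proof of Theorem~\ref{T:generalization} but to refine the orbit-sum estimate so that the auxiliary term $\log(n/\omega^{-1}(2/n))$ is no longer needed. The hypothesis that $\omega(x)/x^\alpha$ is decreasing on $(0,a(\alpha))$ provides the polynomial lower bound $\omega(x)\geq c_\alpha x^\alpha$ for small $x$, and this is the quantitative input that will drive the refinement.

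First I would reduce to a single parabolic component. Since $\gamma(f)=1$, every fixed point of $f$ has derivative $1$, and the complement of $\mathrm{Fix}(f)$ in $[0,1]$ decomposes into open intervals on each of which $f$ is monotone without interior fixed points. The bound $\Gamma_n(f)=\max(\|(f^n)'\|_\infty,\|(f^{-n})'\|_\infty)$ can be handled separately on each component, so I fix one component $[a,b]$ and WLOG assume $f(x)>x$ on $(a,b)$. Then $f'(a)=f'(b)=1$, and membership in $\mathrm{Diff}^\omega_0[0,1]$ yields $|\log f'(y)|\leq C(f)\,\omega(\mathrm{dist}(y,\{a,b\}))$. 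For $x\in[a,b]$ and $\delta_k:=\mathrm{dist}(f^k(x),\{a,b\})$, the identity $\log(f^n)'(x)=\sum_{k=0}^{n-1}\log f'(f^k(x))$ reduces the theorem to the orbit-sum bound
\[
\sum_{k=0}^{n-1}\omega(\delta_k)\;\leq\;C(f)\,n\,\omega(1/n).
\]

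To attack this, I would use the telescoping identity $d_k-d_{k+1}=\int_{b-d_k}^{b}(1-f'(t))\,dt$ for $d_k:=b-f^k(x)$ (on the side of the orbit approaching $b$; the $a$-side is symmetric) combined with $|1-f'(t)|\leq C\omega(b-t)$, yielding $|d_k-d_{k+1}|\leq C\int_0^{d_k}\omega(s)\,ds$. Under the regularity hypothesis, integration by parts gives the two-sided comparability $\int_0^x\omega(s)\,ds\asymp_\alpha x\,\omega(x)$, which implies that the orbit traverses the dyadic annulus at scale $2^{-j}$ in at most $\lesssim 1/\omega(2^{-j})$ iterations. A dyadic decomposition of the indices then makes each scale contribute $O(1)$ to the orbit sum; combining the $O(\log n)$ relevant scales with the trivial far-field contribution from iterations with $\delta_k\gtrsim 1$ yields the $C(f)\,n\,\omega(1/n)$ bound.

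The main obstacle is that the estimate $|d_k-d_{k+1}|\leq C\int\omega$ is only one-sided: nothing prevents the orbit from moving arbitrarily slowly through some annuli, which would ruin the dyadic counting if applied to a single orbit. To handle this I expect one cannot estimate $\log(f^n)'(x)$ directly but must compare it with $\log(f^n)'(c)$ at a reference point $c\in[a,b]$ with $(f^n)'(c)=1$, which exists by the mean value theorem applied to $f^n:[a,b]\to[a,b]$. The orbit-sum then becomes $\sum_k\omega(|f^k(x)-f^k(c)|)$, in which the inter-orbit distances are controlled by a Koebe-type distortion principle for interval diffeomorphisms; the hypothesis that $\omega(x)/x^\alpha$ is decreasing is exactly what closes this distortion bootstrap.
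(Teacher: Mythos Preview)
Your central reduction is wrong. You claim the theorem reduces to the orbit-sum bound $\sum_{k=0}^{n-1}\omega(\delta_k)\le C(f)\,n\,\omega(1/n)$ with $\delta_k=\operatorname{dist}(f^k(x),\{a,b\})$, but this inequality is \emph{false} under the hypotheses, even though the theorem is true. Take $\omega(x)=x^{1/2}$ (so $\omega(x)/x^\alpha$ is decreasing for any $\alpha\in(1/2,1)$) and on the component $(0,1)$ let $\phi(y)=f(y)-y$ agree with $(1-y)^{10}$ near $y=1$; this is $C^\infty$, hence in $\operatorname{Diff}^\omega_0$. The orbit $d_k=1-f^k(x)$ satisfies $d_k-d_{k+1}=d_k^{10}$, so $d_k\asymp k^{-1/9}$, and $\sum_{k\le n}\omega(d_k)\asymp\sum_{k\le n}k^{-1/18}\asymp n^{17/18}$, while $n\,\omega(1/n)=n^{1/2}$. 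The crude bound $|\log f'(y)|\le C\omega(\operatorname{dist}(y,\{a,b\}))$ throws away the information that $|\phi'|$ may be far smaller than $\omega(\operatorname{dist})$ when $\phi$ is very flat, and with it the cancellation in $\sum_k\log f'(f^k(x))$. Your dyadic argument and its acknowledged one-sidedness are therefore beside the point: the target inequality itself is not available.

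The paper does not attempt to bound the orbit sum of $\omega(\delta_k)$. Instead it uses the comparison lemma from the proof of Theorem~\ref{T:generalization},
\[
\bigl|\log(f^n)'(x_1)-\log\tfrac{\phi(x_{n+1})}{\phi(x_1)}\bigr|\le C(f)\,n\,\omega(1/n),
\]
which already absorbs one $C\,n\,\omega(1/n)$, and then proves $\phi(x_n)\ge e^{-Cn\omega(1/n)}$ by induction on $n$. The induction step assumes $\phi(x_n)<e^{-Cn\omega(1/n)}$, shows this forces $\phi'\le 3\omega(1/n)$ on $[x_n,x_{n-1}]$, and then uses the regularity hypothesis in the form $n\omega(1/n)-(n-1)\omega(1/(n-1))\ge(1-\beta)\omega(1/n)$ to derive a contradiction for large $C$. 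Your fallback idea of comparing with a reference orbit $c$ with $(f^n)'(c)=1$ might be salvageable---it would require proving $\sum_k|f^k(x)-f^k(c)|\le C(f)$ and then invoking Jensen---but the ``Koebe-type distortion bootstrap'' you allude to is not an argument, and nothing in your proposal explains how the monotonicity of $\omega(x)/x^\alpha$ would enter.
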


 The next set of theorems present a sufficient sharpness for
the estimates of the bounds in Theorems \ref{T:generalization} and
\ref{T:low smoothness} respectively.

\begin{thm} \label{T:sharpness1}
 Suppose that for each $0<\alpha<1$ there exists $0<a(\alpha)<1$ such that the function $\tfrac{\omega (x)}{x^\alpha}$
increases for all $x \in [0,a(\alpha)]$ and suppose that
$$\lim_{x \rightarrow 0}\tfrac{\omega(x)}{x \cdot
\log(\frac{e}{x})}=0.$$ Then, there exists a diffeomorphism $f\in
{\rm Diff}^{\omega}_0 [0, 1]$ with $\gamma(f)=1$ such that for any
$\varepsilon>0,$ \\
\[
\log \Gamma_n(f) \ge (1-\varepsilon) \cdot
\log\frac{n}{\omega^{-1}(\frac{c(f)}{n})}\  , \qquad n\to\infty\,.
\]
\end{thm}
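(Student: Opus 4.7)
The plan is to construct a specific $f\in\mathrm{Diff}^\omega_0[0,1]$ whose iterates near the fixed point $0$ realize, up to a factor $(1-\varepsilon)$, the upper bound of Theorem~\ref{T:generalization}. Set $\Phi(x)=\int_0^x\omega(t)\,dt$. Pick $a\in(0,1)$ small enough that $\omega(a)<1/2$, define $f(x)=x-\Phi(x)$ on $[0,a]$ (so $f'(x)=1-\omega(x)$), and extend $f$ smoothly to $[0,1]$ as a diffeomorphism with $\mathrm{Fix}(f)=\{0,1\}$, $f'(0)=f'(1)=1$, and $f<\mathrm{id}$ on $(0,1)$. On $[0,a]$, subadditivity of $\omega$ yields $|f'(x)-f'(y)|=|\omega(x)-\omega(y)|\le\omega(|x-y|)$; on the transition region, the lower bound $\omega(t)\ge\omega(1)\cdot t$ (also a consequence of subadditivity) absorbs the smooth bump into $C\cdot\omega(|x-y|)$. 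Thus $f\in\mathrm{Diff}^\omega_0[0,1]$ and $\gamma(f)=1$.

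Fix $x_0\in(0,a)$ and set $x_k=f^k(x_0)$, so $x_k\downarrow 0$ monotonically. The recursion $x_{k+1}=x_k-\Phi(x_k)$ is well-approximated by the ODE $\dot x=-\Phi(x)$, giving $n\asymp\int_{x_n}^{x_0}dx/\Phi(x)$. For each $\alpha<1$, condition~1 gives $\omega(t)/t^\alpha\le\omega(x)/x^\alpha$ for $t\le x$ in a neighborhood of $0$, hence $\Phi(x)\le x\omega(x)/(\alpha+1)$; a matching lower bound $\Phi(x)\ge x\omega(x)/4$ follows from subadditivity ($\omega(x/2)\ge\omega(x)/2$). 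Substituting into the integral and evaluating yields $\omega(x_n)\asymp c(f)/n$, so $x_n\asymp\omega^{-1}(c(f)/n)$; furthermore, $\omega(x)\ge\omega(1)\,x$ gives $\log(1/x_n)\ge\log n-O(1)$.

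Since $(f^{-n})'(x_n)=\prod_{k=0}^{n-1}1/f'(x_k)$,
\[
\log\Gamma_n(f)\ \ge\ -\sum_{k=0}^{n-1}\log\bigl(1-\omega(x_k)\bigr)\ \ge\ \sum_{k=0}^{n-1}\omega(x_k).
\]
Using $\omega(x_k)\ge(\alpha+1)(x_k-x_{k+1})/x_k$ (from the upper bound on $\Phi$) and a standard telescoping/Taylor estimate whose error is controlled by $\sum\omega(x_k)^2=O(1)$,
\[
\sum_{k=0}^{n-1}\omega(x_k)\ \ge\ (\alpha+1)\log(x_0/x_n)+O(1)\ \ge\ (\alpha+1)\log n-O(1).
\]
Given $\varepsilon>0$, take $\alpha=1-\varepsilon$: then the right-hand side minus $(1-\varepsilon)\log(n/\omega^{-1}(c(f)/n))=(1-\varepsilon)(\log n+\log(1/x_n))$ is at least $\varepsilon\log n+O(1)$, positive for $n$ large. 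The main obstacle is precisely this last chain: the pointwise regularity of $\omega$ from condition~1 has to be parlayed into the sharp asymptotic constant $2$ in $\sum_k\omega(x_k)\gtrsim(\alpha+1)\log(1/x_n)$, so that combined with $\log(1/x_n)\sim\log n$ it produces $(1-\varepsilon)\log(n/x_n)\sim 2(1-\varepsilon)\log n$; condition~2 enters only tangentially, ensuring that the first term of the bound in Theorem~\ref{T:generalization} dominates the second, so that $(1-\varepsilon)\log(n/\omega^{-1}(c(f)/n))$ is indeed the right target to match.
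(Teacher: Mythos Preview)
Your construction is identical to the paper's: both take $f(x)=x-\Phi(x)$ with $\Phi(x)=\int_0^x\omega$. The analysis, however, diverges. The paper invokes Lemma~3 (the comparison $|\log(f^n)'(x_1)-\log(\phi(x_{n+1})/\phi(x_1))|\le C\,n\,\omega(1/n)$ proved for Theorem~4), then establishes $\omega(x_n)\le C/n$ by an explicit induction, deduces $\phi(x_n)\le (C/n)\,\omega^{-1}(C/n)$, and finally appeals to condition~2 to show that the error $C\,n\,\omega(1/n)$ is $o\bigl(\log(n/\omega^{-1}(c/n))\bigr)$. You instead estimate $\sum_k\omega(x_k)$ directly: the inequality $\Phi(x)\le x\omega(x)/(\alpha+1)$, extracted from condition~1, gives $\omega(x_k)\ge(\alpha+1)(x_k-x_{k+1})/x_k$, and the telescoping yields $(\alpha+1)\log(1/x_n)-O(1)$. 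This is a genuinely more elementary route---it bypasses Lemma~3 entirely and, as you note, never uses condition~2 in the argument itself.

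One caution about the closing arithmetic. After the display you write ``the right-hand side minus $(1-\varepsilon)(\log n+\log(1/x_n))$ is at least $\varepsilon\log n+O(1)$''. If ``right-hand side'' means the last expression $(\alpha+1)\log n-O(1)$, this fails: one gets $\log n-(1-\varepsilon)\log(1/x_n)$, and since $\omega(x)/x$ need not be bounded (take $\omega(x)=x\sqrt{\log(e/x)}$, which satisfies both hypotheses) one may have $\log(1/x_n)>\log n$. You should keep the \emph{middle} expression $(\alpha+1)\log(x_0/x_n)+O(1)$; then the difference is $\log(1/x_n)-(1-\varepsilon)\log n+O(1)\ge\varepsilon\log n-O(1)$ directly from $x_n\le C/n$, and the conclusion follows. (Alternatively, condition~1 gives $\omega(x)\le C_\alpha x^\alpha$, hence $\log(1/x_n)\le(1/\alpha)\log n+O_\alpha(1)$ for every $\alpha<1$, so $\log(1/x_n)\sim\log n$ as you remark in the last sentence; this also repairs the weaker version.)
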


\begin{thm} \label{T:sharpness2}
 Suppose that the modulus of continuity $\omega$
satisfies assumptions of Theorem~\ref{T:low smoothness}. Then
there exists a diffeomorphism $f\in {\rm Diff}^{\omega}_0 [0, 1]$
with $\gamma(f)=1, $ such that for each $\varepsilon > 0$,
\[
\log \Gamma_n (f) \ge c(\varepsilon) n^{1-\varepsilon} \omega
\left( \frac1{n} \right)\,, \qquad n\to\infty\,.
\]
\end{thm}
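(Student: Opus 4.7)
The plan is to construct an explicit $f\in\mathrm{Diff}^\omega_0[0,1]$ realizing the claimed growth rate, as a multi-scale gluing of rescaled bump perturbations of the identity, in the spirit of the lower-bound constructions for the H\"older case in [B].

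First I would fix a smooth non-negative bump $\phi\in C^\infty([0,1])$ vanishing to second order at the endpoints, and define a one-parameter model family $g_\epsilon(y)=y-\epsilon\phi(y)$. Choosing scales $L_k=2^{-k}$ and dyadic intervals $I_k=[2^{-k-1},2^{-k}]$, I set $f|_{I_k}$ to be the affine rescaling of $g_{\epsilon_k}$ onto $I_k$ with amplitude $\epsilon_k\asymp\omega(L_k)$. The dyadic endpoints $a_k=2^{-k}$ (together with $0,1$) become fixed points with $f'(a_k)=1$, ensuring $\gamma(f)=1$. The regularity $f\in\mathrm{Diff}^\omega_0$ is verified in two pieces: within each $I_k$ one has $|f'(x)-f'(y)|\le C(\omega(L_k)/L_k)|x-y|$, and the hypothesis that $\omega(\delta)/\delta^\alpha$ is decreasing forces $\omega(\delta)/\delta$ to be decreasing as well (since $\alpha<1$), so $\omega(L_k)/L_k\le \omega(|x-y|)/|x-y|$ for $|x-y|\le L_k$; across intervals the continuity is automatic since $f'(a_k)=1$ on both sides, and the doubling-type inequality $\omega(L_k)\le C\omega(L_{k+1})$ (another consequence of the hypothesis) handles comparisons at larger scales.

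For the growth lower bound I would analyze the inverse orbit of a carefully chosen point. Given $n$, choose $k=k(n)$ so that $\omega(L_k)\asymp 1/n$ (equivalently $L_k\asymp\omega^{-1}(1/n)$). The aim is to exhibit a starting point $x$ whose inverse orbit $\{f^{-j}(x)\}_{j=0}^{n}$ traverses a large block of intervals $I_{k},I_{k-1},\ldots$, so that the log-derivative contributions from each scale visited sum to order $n\omega(L_k)\asymp n\omega(1/n)$. Optimizing the choice of $k$ and of the orbit introduces a loss of $n^\varepsilon$ that is absorbed in the final estimate.

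The main obstacle is the last step: a single-interval analysis yields only $\log\Gamma_n\gtrsim\log n$ (which is what one gets by the Polterovich--Sodin-type analysis of $g_{\epsilon_k}^{-n}$ on $I_k$ with $n\asymp 1/\epsilon_k$), and this is much weaker than the target subexponential rate $n^{1-\varepsilon}\omega(1/n)$. Reaching the target requires the orbit to pick up non-trivial contributions across many scales; this in turn forces one to replace the strict fixed-point structure at the $a_k$ by a ``soft'' tangency allowing orbits to cross from $I_k$ into $I_{k+1}$, while still controlling $\omega_{f'}$ — a delicate balance whose careful execution, guided by the regularity of $\omega$, is the technical heart of the construction.
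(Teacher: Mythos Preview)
Your own diagnosis at the end is correct, and it is fatal for the proposal as written: with genuine fixed points at each dyadic endpoint $a_k$, every orbit is trapped inside a single $I_k$, and on that interval the map is a $C^2$ parabolic germ, so $\log\Gamma_n\lesssim\log n$ there for every $k$. The ``soft tangency'' fix you allude to is exactly the missing idea, and you have not supplied it; arranging orbits to cross between dyadic blocks while keeping $\omega_{f'}\le C\omega$ and $f'=1$ at the accumulation of fixed points is the entire difficulty, and nothing in the sketch indicates how to do it or why the resulting contributions would sum to $n^{1-\varepsilon}\omega(1/n)$ rather than, say, $\log n$ again.

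The paper's mechanism is quite different and avoids scale-crossing altogether. For each fixed $\varepsilon>0$ it builds a \emph{single} map on one interval near $0$,
\[
f_\varepsilon(x)=\Bigl(1+\tfrac1x\Bigr)^{-1}+x^{2+\varepsilon}\,\omega(x)\,\sin\frac{2\pi}{x},
\]
chosen so that the oscillating correction vanishes at $x=1/m$ and hence the explicit orbit $f_\varepsilon(1/k)=1/(k+1)$ is preserved, while the \emph{derivative} of that correction at $x=1/m$ contributes a positive amount of order $m^{-\varepsilon}\omega(1/m)$ to $\log f_\varepsilon'$. Summing along the orbit gives
\[
\log\bigl(f_\varepsilon^N\bigr)'(1/k)\ \gtrsim\ \sum_{j=0}^{N-1}(k+j)^{-\varepsilon}\omega\!\bigl(\tfrac{1}{k+j}\bigr)\ \gtrsim\ c(\varepsilon)\,N^{1-\varepsilon}\omega(1/N).
\]
All the growth comes from one orbit converging to a single fixed point; the role of the gluing is only to place translated copies of $f_{\varepsilon_k}$, with $\varepsilon_k\downarrow 0$, on disjoint subintervals (identity elsewhere), so that for every target $\varepsilon$ one of the pieces already yields the bound. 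The oscillating perturbation $x^{2+\varepsilon}\omega(x)\sin(2\pi/x)$ is the key device your sketch lacks.
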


The proofs of Theorems 4-7 use ideas and techniques introduced in
${\rm [L, chapter II]}$ and especially in ${\rm [B]}.$

\bigskip

\section{Growth gap: Proofs of theorems 4 and 5} \label{S:proofs1}

The following lemma (see ${\rm [EF, Dz]}$) states that every
modulus of continuity admits an equivalent concave modulus of
continuity:

\begin{lem} \label{L:3}
For any modulus of continuity $\omega$ there exists a concave
modulus of continuity $\omega^*$ such that $\omega \le \omega^*
\le 2\omega$ everywhere on $[0, 1]$.
\end{lem}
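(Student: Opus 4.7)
The plan is to take $\omega^*$ to be the least concave majorant (concave envelope) of $\omega$ on $[0,1]$, realised by the explicit one-dimensional formula
$$\omega^*(x) = \sup_{0 \le a \le x \le b \le 1,\, a<b} \frac{(b-x)\omega(a) + (x-a)\omega(b)}{b-a},$$
that is, the largest value at $x$ among chords of the graph of $\omega$ that straddle $x$. Concavity of $\omega^*$ is automatic since it is a pointwise supremum of affine functions, and the inequality $\omega^* \ge \omega$ is immediate upon allowing the limiting choice $a = x$ (or $b = x$) in the supremum, which returns the value $\omega(x)$.

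The crux is the upper estimate $\omega^* \le 2\omega$. Fix $x \in (0,1]$ and an admissible pair $(a,b)$; write $u = x-a$ and $v = b-x$, so $u+v = b-a > 0$. Monotonicity of $\omega$ gives $\omega(a) \le \omega(x)$, while iterating subadditivity together with monotonicity yields $\omega(b) \le \lceil b/x\rceil\,\omega(x) \le \frac{b+x}{x}\,\omega(x) = \frac{2x+v}{x}\,\omega(x)$. Plugging these two bounds into the chord formula gives
$$\frac{(b-x)\omega(a)+(x-a)\omega(b)}{b-a} \le \frac{v + 2u + uv/x}{u+v}\,\omega(x),$$
and since $u = x - a \le x$ we have $uv/x \le v$, whence the right-hand side is at most $\frac{2u+2v}{u+v}\,\omega(x) = 2\omega(x)$. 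Taking the supremum over $(a,b)$ gives $\omega^*(x) \le 2\omega(x)$. The main obstacle here is that $\omega(b)$ can be much larger than $\omega(x)$ when $b \gg x$; subadditivity is needed to control this growth, and the factor $(x-a)/(b-a)$ coming from the chord formula is exactly what absorbs the overhead to give the sharp constant $2$.

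It remains to check that $\omega^*$ is itself a modulus of continuity. From $\omega^* \le 2\omega$ and $\omega(0)=0$ we get $\omega^*(0)=0$; continuity of $\omega^*$ on $(0,1)$ is automatic for concave functions, and continuity at the endpoints follows by sandwiching with $\omega$ and $2\omega$. Monotonicity of $\omega^*$ is inherited from that of $\omega$: any chord appearing in the supremum is nondecreasing (its slope $(\omega(b)-\omega(a))/(b-a)$ is nonnegative), so the value it produces at any $x_2 > x_1$ is at least its value at $x_1$ whenever it is still admissible, and otherwise one extends the right endpoint to reach $x_2$. Finally, subadditivity of $\omega^*$ follows from concavity and $\omega^*(0)=0$ by the standard convexity identity: for $s,t \ge 0$ with $s+t\le 1$, applying concavity to $s = \tfrac{s}{s+t}(s+t)+\tfrac{t}{s+t}\cdot 0$ and symmetrically to $t$ and summing gives $\omega^*(s)+\omega^*(t) \ge \omega^*(s+t)$.
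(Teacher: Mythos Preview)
The paper does not prove this lemma; it is quoted from the references [EF] and [Dz]. Your construction via the least concave majorant is the standard one, and the central estimate $\omega^*\le 2\omega$ is carried out correctly and cleanly: the use of subadditivity to bound $\omega(b)\le\lceil b/x\rceil\,\omega(x)\le\frac{2x+v}{x}\,\omega(x)$ and the observation $u\le x$ are exactly what is needed to get the sharp constant $2$.

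Two small slips in the auxiliary verifications are worth fixing. First, your justification of concavity is stated backwards: a pointwise supremum of affine functions is \emph{convex}, not concave. What is true (and standard) is that your chord formula coincides with the infimum of all affine majorants of the continuous function $\omega$ on the compact interval $[0,1]$, and an infimum of affine functions is concave; alternatively one checks directly that any concave majorant of $\omega$ dominates every chord value, while the chord-supremum itself majorizes $\omega$ and is concave. Second, the sandwich $\omega\le\omega^*\le 2\omega$ does not by itself yield continuity of $\omega^*$ at $x=1$, since $\omega(1)\ne 2\omega(1)$ in general. Continuity there follows instead from the monotonicity you establish (giving $\limsup_{x\to 1^-}\omega^*(x)\le\omega^*(1)$) together with concavity (giving $\liminf_{x\to 1^-}\omega^*(x)\ge\omega^*(1)$). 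Neither point affects the overall validity of the argument.
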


Due to this lemma, we assume in the proofs of Theorems 4 and 5
that $\omega$ is a concave modulus of continuity.

\begin{proof}[Proof of Theorem~\ref{T:generalization}]

First, we will introduce several notations and definitions:
$\phi(x):= f(x) - x;$ $x_n = f(x_{n-1})$; $A=\max_{x \in
[0,1]}f'(x)$, $a=\min_{x \in [0,1]}f'(x)$. Choose a sufficiently
small $ \varepsilon
>0,$ such that we will have
$\omega(\varepsilon) < 1.$ WLOG, we assume that $\phi(x)$ is
positive. Consider a function $x \mapsto x \cdot \omega(x)$ which
maps $[0,\varepsilon]$ on $[0,\varepsilon \cdot
\omega(\varepsilon)],$ and denote by $\Omega(x):[0,\varepsilon \cdot
\omega(\varepsilon)] \rightarrow [0,\varepsilon]$ its inverse. Now
pick a positive $\delta < \varepsilon $, so that the following
requirement will be satisfied:

For all $x \in [0, \delta]$ we have $ \phi(x) \in [0,\varepsilon
\cdot \omega(\varepsilon)]$ and
$$J_x:=[x,f(x)] \subseteq I_x := [x - \Omega(\phi(x)),x + \Omega(\phi(x))]
\subseteq [0, \varepsilon].$$ \\
Let us explain why it is possible. It is obviously possible to
require that $\phi(x) \in [0,\varepsilon \cdot
\omega(\varepsilon)]$ and $x + \Omega(\phi(x)) \leq \varepsilon$
for all $x \in [0,\delta],$ due to continuity. The inequality
$$ 0 \leq x - \Omega(\phi(x)) $$
is equivalent to that
$$ \phi(x) \leq x \cdot \omega(x)$$
which is satisfied for all $x \in [0,\delta],$ since $|\phi'(x)|
\leq \omega(x).$

We will present now a sequence of technical claims, which will be
used later in the proof of Theorem~\ref{T:generalization}.

\begin{clm} \label{Cl:1}
(a) For any $x \in [0,1]$ and $y \in [x,f(x)]$,
$$ \frac{1}{A} \leq \frac{\phi(x)}{\phi(y)} \leq \frac{1}{a}.$$

(b)  For any $ x_1 \in [0,1]$ and $n \in \mathbb{N},$
 $$ \frac{1}{A} \cdot n \leq
\int_{x_1}^{x_{n+1}}\frac{dt}{\phi(t)} \leq \frac{1}{a} \cdot n.$$
\end{clm}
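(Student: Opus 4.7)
The plan is to establish (a) by a direct mean value theorem computation for $\phi$, and then derive (b) by integrating the pointwise bound from (a) on each iterate interval $[x_k,x_{k+1}]$ and summing.

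For part (a), working under the WLOG assumption that $\phi>0$ on the interval of interest, fix $x\in[0,1]$ with $\phi(x)>0$ and $y\in[x,f(x)]$, and write
\[
\phi(y)-\phi(x) \;=\; \bigl(f(y)-f(x)\bigr)-(y-x) \;=\; \bigl(f'(\xi)-1\bigr)(y-x),
\]
with $\xi\in[x,y]$ supplied by the mean value theorem applied to $f$. Dividing by $\phi(x)$,
\[
\frac{\phi(y)}{\phi(x)} \;=\; 1+\bigl(f'(\xi)-1\bigr)\cdot\frac{y-x}{\phi(x)}.
\]
Since $f(0)=0$ and $f(1)=1$, the identity $\int_0^1 f'\,dt=1$ forces $a\le 1\le A$. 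The factor $(y-x)/\phi(x)$ lies in $[0,1]$ because $y\in[x,f(x)]$ means $0\le y-x\le \phi(x)$. A short case split on the sign of $f'(\xi)-1$ then shows that in either case $\phi(y)/\phi(x)\in[a,A]$, which is equivalent to $\phi(x)/\phi(y)\in[1/A,1/a]$.

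Part (b) follows by decomposing the integral along the orbit:
\[
\int_{x_1}^{x_{n+1}}\frac{dt}{\phi(t)} \;=\; \sum_{k=1}^{n}\int_{x_k}^{x_{k+1}}\frac{dt}{\phi(t)}.
\]
On $[x_k,x_{k+1}]=[x_k,f(x_k)]$, part (a) applied with $x=x_k$ gives $a\,\phi(x_k)\le\phi(t)\le A\,\phi(x_k)$ for every $t$ in this interval. Since the length of $[x_k,x_{k+1}]$ is $x_{k+1}-x_k=\phi(x_k)$, integrating the reciprocal bounds yields
\[
\frac{1}{A}\;\le\;\int_{x_k}^{x_{k+1}}\frac{dt}{\phi(t)}\;\le\;\frac{1}{a}.
\]
Summing over $k=1,\dots,n$ produces the desired estimate.

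The only mildly subtle point is the sign-based case distinction in (a); once the identity $\phi(y)=\phi(x)+(f'(\xi)-1)(y-x)$ is in hand, nothing else is nontrivial. Part (b) is then a purely mechanical integration and telescoping argument, so I expect no real obstacle beyond the bookkeeping in (a).
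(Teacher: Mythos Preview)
Your proof is correct and follows essentially the same route as the paper: both write $\phi(y)/\phi(x)=1+\theta\,\phi'(\xi)$ with $\theta=(y-x)/\phi(x)\in[0,1]$ (equivalently your $f'(\xi)-1$), bound this quantity by $a$ and $A$, and then obtain (b) by integrating the resulting pointwise bound over each $[x_k,x_{k+1}]$ and summing. If anything, your explicit case split on the sign of $f'(\xi)-1$ is slightly cleaner than the paper's one-line estimates, which implicitly rely on the same dichotomy.
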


\begin{proof}[Proof of Claim~\ref{Cl:1}]
 For any $y \in [x,f(x)]$, there
exists $ 0 \leq \theta \leq 1 $ such that $y=x+\theta\cdot\phi(x)$
and $0 \leq \theta_{1} \leq 1$, such that
$$\frac{\phi(y)}{\phi(x)} = \frac{\phi(x)+\theta\phi(x)\phi'(x+\theta_{1}\theta\phi(x))}{\phi(x)} \leq 1 + \max_{x \in [0,1]} {\phi'(x)} \leq
A.$$
 In the same way,
$$\frac{\phi(x)}{\phi(y)} = \frac{\phi(x)}{\phi(x)+\theta\phi(x)\phi'(x+\theta_{1}\theta\phi(x))} \leq \frac{1}{1+\phi'(x+\theta_{1}\theta\phi(x))}
\leq \frac{1}{a}.$$ Therefore, for all $k \in \mathbb{N}:$
$$\frac{1}{A} \leq \int_{x_k}^{x_{k+1}}\frac{dt}{\phi(t)} \leq
\frac{1}{a} .$$ By summing the integrals we obtain the desirable
inequality.
\end{proof}

\begin{clm} \label{Cl:2}
 For all $x \in [0,\delta]$ and $y \in I_x,$ we
have

$$(a) \ \ \ |\phi'(y)| \leq 3 \omega(\Omega(\phi(x))) = 3
\frac{\phi(x)}{\Omega(\phi(x))}.$$

$$ (b)  \ \ \ \frac{\phi(y)}{\phi(x)} \leq 2.5 . $$ \\ \\
\end{clm}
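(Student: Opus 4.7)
The plan is to prove (a) first via a Taylor-with-integral-remainder argument that crucially exploits the nonnegativity of $\phi$ on $I_x$, and then deduce (b) by integrating and applying (a).

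For (a), abbreviate $\Omega := \Omega(\phi(x))$, so by the definition of $\Omega$ we have the identity $\Omega\,\omega(\Omega) = \phi(x)$, equivalently $\omega(\Omega) = \phi(x)/\Omega$. The first-order Taylor identity
\[
\phi(x+h) = \phi(x) + \phi'(x)h + \int_0^h \bigl(\phi'(x+s) - \phi'(x)\bigr)\, ds
\]
holds for any $h$ with $x + h \in [0,1]$, and the remainder is controlled by the modulus of continuity of $\phi'$: for $|h|\leq \Omega$,
\[
\left|\int_0^h \bigl(\phi'(x+s) - \phi'(x)\bigr)\, ds\right| \leq |h|\,\omega(|h|) \leq \Omega\,\omega(\Omega) = \phi(x).
\]
The key step is to use that $\phi(x\pm \Omega) \geq 0$, which holds because $\phi\geq 0$ on $[0,1]$ and $I_x\subseteq [0,\varepsilon]$. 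Plugging $h = -\Omega$ into the Taylor identity and using $\phi(x - \Omega)\geq 0$ yields $\phi'(x) \leq 2\phi(x)/\Omega = 2\omega(\Omega)$; plugging $h = +\Omega$ and using $\phi(x+\Omega)\geq 0$ yields $\phi'(x)\geq -2\omega(\Omega)$. Thus $|\phi'(x)|\leq 2\omega(\Omega)$. Finally, for any $y\in I_x$, the modulus-of-continuity hypothesis gives $|\phi'(y) - \phi'(x)| \leq \omega(|y-x|) \leq \omega(\Omega)$, and the triangle inequality yields $|\phi'(y)|\leq 3\omega(\Omega) = 3\phi(x)/\Omega$, which is (a).

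For (b), the direct approach is to write $\phi(y) - \phi(x) = \int_x^y \phi'(t)\,dt$ and apply (a): combined with $|y-x|\leq \Omega$, this gives $|\phi(y)-\phi(x)|\leq 3\omega(\Omega)\cdot \Omega = 3\phi(x)$. To reach the stated constant $2.5$, I would instead expand $\phi(y) = \phi(x) + \phi'(x)(y-x) + R$ with $|R|\leq |y-x|\,\omega(|y-x|)$, insert the sharper bound $|\phi'(x)|\leq 2\omega(\Omega)$ from the proof of (a), and tighten the remainder control, perhaps by exploiting concavity of $\omega$ (so that $\int_0^{|h|}\omega(s)\,ds$ is controlled by roughly $\tfrac12 |h|\omega(|h|)$ in a suitable sense), or by splitting into the cases $|y-x|\leq \Omega/2$ and $\Omega/2<|y-x|\leq \Omega$ and treating each separately.

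The main obstacle I anticipate is matching the exact stated constant $2.5$ in (b); the straightforward scheme yields $4$. However, the essential qualitative content—that $\phi(y) = O(\phi(x))$ and $|\phi'(y)| = O(\omega(\Omega(\phi(x))))$ uniformly for $y\in I_x$—is fully captured. The genuinely new idea is bounding the pointwise derivative $|\phi'(x)|$ by a multiple of $\omega$ evaluated at the intrinsic scale $\Omega(\phi(x))$ dictated by the flow itself, using only nonnegativity of $\phi$ and the modulus of continuity of $\phi'$; this is precisely what makes the subsequent proof of Theorem~\ref{T:generalization} go through.
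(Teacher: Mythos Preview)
Your argument for (a) is correct and rests on the same two ingredients as the paper---nonnegativity of $\phi$ on $I_x$ and the modulus-of-continuity control on $\phi'$---just organized a bit differently: you bound $|\phi'(x)|\le 2\omega(\Omega)$ at the center via a Taylor identity and then extend to all of $I_x$ by one more application of the modulus of continuity, whereas the paper argues by contradiction directly at an arbitrary $y_0\in I_x$, using subadditivity of $\omega$ to propagate a one-sided bound on $\phi'$ across the whole interval $I_x$ and then integrating to force $\phi<0$ at an endpoint.

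For (b) your approach is in fact exactly the paper's: the mean-value estimate $|\phi(y)-\phi(x)|\le |y-x|\cdot\max_{I_x}|\phi'|\le \Omega\cdot 3\,\omega(\Omega)=3\phi(x)$, giving the ratio $4$. The displayed constant $2.5$ in the paper is an arithmetic slip (effectively $|I_x|=\Omega$ is used in place of $|I_x|=2\Omega$); the paper's own computation, carried out correctly, also yields $4$. Since only a uniform $O(1)$ bound is needed downstream (Lemma~\ref{L:1} and Corollary~\ref{C:2}), your concern about hitting the exact constant $2.5$ is not a genuine gap---no sharper argument is required.
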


\textbf{Remark:} In particular, we obtain that for all $x \in
[0,\delta],$ $|\phi'(x)| \leq 3  \omega(\Omega(\phi(x)))$.

\begin{proof}[Proof of Claim~\ref{Cl:2}]
(a) Suppose that there exists $y_0 \in I_x$ such that $\phi'(y_0)
> 3 \cdot \omega(\Omega(\phi(x)))$. Note that the following
inequalities are satisfied for all $y\in I_x$:
$$\phi'(y)\geq\phi'(y_0)-\omega(|y-y_0|)>3 \cdot \omega(\Omega(\phi(x)))-\omega(2\Omega(\phi(x)))$$ $$\geq (3-2) \cdot \omega(\Omega(\phi(x)))
= \omega(\Omega(\phi(x))).$$ Therefore,
$$\phi(x)-\phi(x-\Omega(\phi(x))) =
\int_{x-\Omega(\phi(x))}^{x}\phi'(t)dt > \omega(\Omega(\phi(x)))
\cdot \Omega(\phi(x))=\phi(x). $$ It follows that
$\phi(x-\Omega(\phi(x)))<0$, what contradicts our assumptions.
Now, assume that there exists a point $y_0 \in I_x$ such that
$\phi'(y_0)<-3 \cdot \omega(\Omega(\phi(x)))$. Then for all $y \in
I_x,$
$$\phi'(y)\leq \phi'(y_0)+\omega(|y-y_0|) < -3 \cdot \omega(\Omega(\phi(x)))+\omega(2\Omega(\phi(x)))$$ $$\leq (-3+2)\omega(\Omega(\phi(x))) \leq
-\omega(\Omega(\phi(x))).$$ Therefore, $$\phi(x+\Omega\phi(x)) -
\phi(x) =
\int_{x}^{x+\Omega(\phi(x))}\phi'(t)dt<-\omega(\Omega(\phi(x)))
\cdot \Omega(\phi(x))=-\phi(x),$$ whence $
\phi(x+\Omega\phi(x))<0$, this is a contradiction.  \\
(b) Using (a) we obtain for some $0<\theta_{1}<1$,
$$\frac{\phi(y)}{\phi(x)}=\frac{\phi(x)+(y-x)\phi'(x+\theta_{1}(y-x))}{\phi(x)}\leq 1 + \max_{y\in I_{x}}|\phi'(y)| \cdot \frac{|I_x|}{2 \phi(x)}\leq$$
$$\leq 1 + \frac{3}{2} \cdot \omega(\Omega(\phi(x)))\frac{\Omega(\phi(x))}{\phi(x)}= 2.5 .$$
\end{proof}

\begin{clm} \label{Cl:3}
 Let $z \in [0,\delta]$ and $n \in \mathbb{N}$ be such that $$n \geq
c(f) \cdot \frac{\Omega(z)}{z}.$$ Then,
$$\frac{1}{z} \leq C(f) \cdot
\frac{n}{\omega^{-1}(\frac{1}{n})}.$$
\end{clm}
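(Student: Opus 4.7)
The plan is to reduce the claim to a purely algebraic inequality via the substitution $u := \Omega(z)$. By the very definition of $\Omega$ as the inverse of the map $x \mapsto x\,\omega(x)$ on $[0,\varepsilon]$, one has $z = u\cdot\omega(u)$, so $\Omega(z)/z = 1/\omega(u)$. The hypothesis $n \ge c(f)\cdot\Omega(z)/z$ thereby becomes $\omega(u) \ge c(f)/n$, which, since $\omega$ is strictly increasing, unpacks to $u \ge \omega^{-1}(c(f)/n)$.

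The next step is to multiply $\omega(u) \ge c(f)/n$ by $u$ to recover $z$: this yields $zn = u\,\omega(u)\,n \ge c(f)\,u$. The conclusion $1/z \le C(f) \cdot n/\omega^{-1}(1/n)$ is equivalent to $\omega^{-1}(1/n) \le C(f)\cdot zn$, so combining with $zn \ge c(f)\,u$ it suffices to show $\omega^{-1}(1/n) \le c(f)\,C(f)\cdot u$. Using $u \ge \omega^{-1}(c(f)/n)$, everything therefore reduces to comparing $\omega^{-1}(1/n)$ with $\omega^{-1}(c(f)/n)$.

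The main delicacy is precisely this last comparison. The cleanest route is to insist that the constant $c(f)$ in the hypothesis satisfy $c(f) \ge 1$ (we are free to enlarge it, and in the intended application this is automatic). Under $c(f) \ge 1$, monotonicity of $\omega^{-1}$ gives $\omega^{-1}(c(f)/n) \ge \omega^{-1}(1/n)$ for free, and the chain $\omega^{-1}(1/n) \le u \le zn/c(f)$ yields the claim with $C(f) = 1/c(f)$. If one wanted to allow $c(f) < 1$, convexity of $\omega^{-1}$ (available since Lemma \ref{L:3} lets us take $\omega$ concave) would only provide $\omega^{-1}(c(f)/n) \le c(f)\,\omega^{-1}(1/n)$---the wrong direction---so one really does need to enlarge $c(f)$. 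This is the one step where I would expect the reader to pause, but it is cosmetic and is resolved by an appropriate choice of the constant at the outset.
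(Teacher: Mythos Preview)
Your argument is essentially the paper's own: set $s=\Omega(z)$ so that $s\,\omega(s)=z$, read off $\omega(s)=z/\Omega(z)\ge c(f)/n$, invert to $s\ge\omega^{-1}(c(f)/n)$, and multiply back to get $z\ge s\cdot c(f)/n\ge c(f)\,C(f)\,\omega^{-1}(1/n)/n$. The paper likewise writes $s\ge\omega^{-1}(c(f)/n)\ge C(f)\,\omega^{-1}(1/n)$ without further comment, so your extra paragraph on that step is more rather than less careful than the original; do note, however, that in the actual applications (Lemma~\ref{L:1}) the constant is $a/2.5<1$, so ``automatic in the intended application'' is optimistic---the honest fix is to carry a constant inside $\omega^{-1}$ through to the end, which is exactly why $\omega^{-1}(2/n)$ rather than $\omega^{-1}(1/n)$ appears in the statement of Theorem~\ref{T:generalization}.
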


\begin{proof}[Proof of Claim~\ref{Cl:3}]
 Denote $s=\Omega(z)$, and notice
that $s \cdot \omega(s) = z.$ Thus,
$$\omega(s)=\frac{z}{\Omega(z)} \geq \frac{c(f)}{n}$$
$$s \geq \omega^{-1}(\frac{c(f)}{n}) \geq C(f) \cdot \omega^{-1}(\frac{1}{n}),$$
therefore,
$$ z \geq s \cdot \frac{c(f)}{n} \geq c(f) \cdot C(f) \frac{\omega^{-1}(\frac{1}{n})}{n},  $$
and we are done.
\end{proof}

We turn now to the following two lemmas, on which the proof of
Theorem 4 will be based.

\begin{lem} \label{L:1}
Suppose that $x_1,...,x_{n+1} \in (0,\delta)$. Then,

$$ |\log(\frac{\phi(x_{n+1})}{\phi(x_1)})| \leq
\log{\frac{n}{\omega^{-1}(\frac{1}{n})}} + C(f, \omega).$$
\end{lem}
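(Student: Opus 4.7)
My plan is to reduce the lemma to an estimate on the sum $\sum_{k=1}^n \omega(\Omega(\phi(x_k)))$ and then control that sum using Claim~\ref{Cl:3} together with an integral-comparison argument.

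First I would telescope the logarithm as
\[
\log\frac{\phi(x_{n+1})}{\phi(x_1)} \;=\; \sum_{k=1}^{n}\log\frac{\phi(x_{k+1})}{\phi(x_k)}.
\]
Since $x_{k+1}-x_k=\phi(x_k)$, the mean value theorem gives $\phi(x_{k+1})/\phi(x_k)=1+\phi'(\xi_k)$ for some $\xi_k\in(x_k,x_{k+1})\subset I_{x_k}$. By Claim~\ref{Cl:2}(a), $|\phi'(\xi_k)|\le 3\omega(\Omega(\phi(x_k)))$, which can be assumed smaller than $1/2$ by shrinking $\delta$. The Taylor estimate $|\log(1+u)|\le 2|u|$ for $|u|\le 1/2$ then yields
\[
\left|\log\frac{\phi(x_{n+1})}{\phi(x_1)}\right| \;\le\; C\sum_{k=1}^{n}\omega(\Omega(\phi(x_k))).
\]

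Next, I would dichotomize the indices according to Claim~\ref{Cl:3}. Setting $S=\{k:\omega(\Omega(\phi(x_k)))<c(f)/n\}$ and $L=\{k:\omega(\Omega(\phi(x_k)))\ge c(f)/n\}$, the contribution from $S$ is bounded trivially by $(c(f)/n)\cdot n=c(f)$. For each $k\in L$, Claim~\ref{Cl:3} furnishes $\phi(x_k)\ge C'(f)\,\omega^{-1}(1/n)/n$, so these indices correspond to orbit points where $\phi$ is not too small.

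The main obstacle is then to bound $\sum_{k\in L}\omega(\Omega(\phi(x_k)))$ by $\log(n/\omega^{-1}(1/n))+C(f,\omega)$. Rewriting $\omega(\Omega(\phi(x_k)))=\phi(x_k)/\Omega(\phi(x_k))=(x_{k+1}-x_k)/\Omega(\phi(x_k))$ and using Claim~\ref{Cl:2}(b) to compare $\Omega(\phi(x))$ across each $J_{x_k}$, this sum is equivalent (up to absolute constants) to the integral $\int_{E_L} dx/\Omega(\phi(x))$, where $E_L=\bigcup_{k\in L}J_{x_k}$. To estimate the integral I plan to combine the elementary bound $\Omega(\phi(x))\le x$ (a consequence of $\phi(x)\le Cx\omega(x)$, valid since $\phi(0)=0$ and $|\phi'|\le \omega$) with the integral constraint $\int dx/\phi\le n/a$ from Claim~\ref{Cl:1}(b). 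A dyadic splitting of $E_L$ by level sets of $\Omega(\phi)$, applying Claim~\ref{Cl:3} at each scale to bound the measure of each dyadic layer against the harmonic budget $n/a$, should produce the required logarithmic factor and close the estimate.
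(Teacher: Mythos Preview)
Your approach has a genuine gap. After telescoping you replace each signed term $\log(\phi(x_{k+1})/\phi(x_k))$ by the absolute bound $C\,\omega(\Omega(\phi(x_k)))$, and then try to show $\sum_{k\in L}\omega(\Omega(\phi(x_k)))\lesssim \log\bigl(n/\omega^{-1}(1/n)\bigr)$. This sum is \emph{not} controlled by that logarithm in general: the telescoping cancellation you threw away can be essentially everything. Concretely, take $\omega(x)=x^\alpha$ and let $\phi$ rise from $0$ to $\epsilon:=a\omega(a)=a^{1+\alpha}$ on $[0,a]$ (permitted since $|\phi'|\le C\omega$) and then stay $\approx\epsilon$ on $[a,\delta]$. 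An orbit starting at $x_1=a$ spends $n\approx(\delta-a)/\epsilon\approx \delta\,a^{-1-\alpha}$ steps on the plateau, and for every such $k$ one has $\Omega(\phi(x_k))\approx a$, hence $\omega(\Omega(\phi(x_k)))\approx a^{\alpha}\ge c(f)/n$, so all plateau indices lie in $L$. Thus $\sum_{k\in L}\omega(\Omega(\phi(x_k)))\approx n\cdot a^{\alpha}\approx \delta/a$, whereas $\log\bigl(n/\omega^{-1}(1/n)\bigr)\approx C_\alpha\log(1/a)$. No dyadic argument using only the constraints you list (total length $\le 1$, harmonic budget $\int dx/\phi\le n/a$, and $\Omega(\phi(x))\le x$) can beat this, since the plateau example saturates them; the integral $\int_{E_L}dx/\Omega(\phi(x))$ really is of order $\delta/a$. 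Even were the sum controllable, your route produces $C\log(\,\cdot\,)$ with $C\ge 6$, while the lemma demands coefficient $1$.

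The paper avoids the whole summation by observing that $|\log(\phi(x_{n+1})/\phi(x_1))|\le \log(1/m)$ with $m=\min(\phi(x_1),\phi(x_{n+1}))$, and then showing via Claims~\ref{Cl:1}, \ref{Cl:2} that unless the two endpoints lie in each other's $I$-neighbourhood (in which case the ratio is bounded by $2.5$), the orbit must cross an interval of length $\Omega(m)$ on which $\phi\le 2.5\,m$, forcing $n\ge (a/2.5)\,\Omega(m)/m$; a single application of Claim~\ref{Cl:3} then gives $1/m\le C(f)\,n/\omega^{-1}(1/n)$, i.e.\ $\log(1/m)\le \log\bigl(n/\omega^{-1}(1/n)\bigr)+C(f,\omega)$ with the correct constant $1$.
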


\begin{proof}[Proof of Lemma~\ref{L:1}]
 We split the proof into 2 cases. \\ \\
Case 1: a. $x_{n+1} \in I_{x_1}$ and $\phi(x_1) < \phi(x_{n+1}).$
In this case,
$$|\log{\frac{\phi(x_1)}{\phi(x_{n+1})}}| = \log{\frac{\phi(x_{n+1})}{\phi(x_1)}} < \log{2.5},$$
due to Claim ~\ref{Cl:2} \\
b. $x_{n+1} \in I_{x_1}$ and $\phi(x_1) > \phi(x_{n+1}).$ We have
two possibilities: \\ (i) $ x_1 + \Omega(\phi(x_{n+1})) > x_{n+1}$
and
by Claim ~\ref{Cl:2} $\frac{\phi(x_1)}{\phi(x_{n+1})} < 2.5.$ \\
(ii) $ x_1 + \Omega(\phi(x_{n+1})) \leq x_{n+1}$, then :
$$n \geq a \cdot \int_{x_1}^{x_{n+1}}\frac{dt}{\phi(t)} \geq a \cdot \int_{x_{n+1}- \Omega(\phi(x_{n+1}))}^{x_{n+1}}\frac{dt}{\phi(t)}$$
$$\geq 2.5 \cdot a \cdot \frac{\Omega(\phi(x_{n+1}))}{\phi(x_{n+1})}.$$
Hence, this case is completed due to Claim ~\ref{Cl:3} \\ \\
Case 2: a. $x_{n+1} \notin I_{x_1}$ and $\phi(x_1) <
\phi(x_{n+1}).$
$$n \geq a \cdot \int_{x_1}^{x_{n+1}}\frac{dt}{\phi(t)} \geq
a \cdot \int_{x_1}^{x_1+\Omega(\phi(x_1))}\frac{dt}{\phi(t)} \geq
\frac{a}{2.5 \cdot \phi(x_1)} \cdot \Omega(\phi(x_1)),$$ in the
last inequality we have used Claim ~\ref{Cl:1}, hence we are done
due to Claim ~\ref{Cl:3}. \\b. $x_{n+1} \notin I_{x_1}$ and
$\phi(x_1)
> \phi(x_{n+1}).$
$$ n \geq \int_{x_1}^{x_{n+1}}\frac{dt}{\phi(t)} \geq a \int_{x_{n+1}-\Omega(\phi(x_1))}^{x_{n+1}}\frac{dt}{\phi(t)} \geq a \int_{x_{n+1}-\Omega(\phi(x_{n+1}))}^{x_{n+1}}\frac{dt}{\phi(t)}$$
$$\geq  \frac{a}{2.5} \cdot \frac{\Omega(\phi(x_{n+1}))}{\phi(x_{n+1})}$$
In the last inequality we have used Claim ~\ref{Cl:1}, hence we
are done due to Claim ~\ref{Cl:3}.
\end{proof}

\begin{lem} \label{L:2} Suppose that
$x_1,...,x_{n+1} \in (0,\delta).$ Then,
$$ |\log{(f^n)'(x_1)}-
\log(\frac{\phi(x_{n+1})}{\phi(x_1)})| \leq C(f) \cdot n \cdot
\omega(\frac{1}{n}).$$
\end{lem}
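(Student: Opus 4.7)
The plan is to reduce the inequality to a single sum via the chain rule, compare it term-by-term with the telescoped ratio using the mean value theorem, and then collapse the resulting sum via concavity of $\omega$. First I would write
\[
\log (f^n)'(x_1) = \sum_{k=1}^{n} \log f'(x_k), \qquad \log\frac{\phi(x_{n+1})}{\phi(x_1)} = \sum_{k=1}^{n} \log\frac{\phi(x_{k+1})}{\phi(x_k)},
\]
so the quantity to bound is $\bigl| \sum_{k=1}^{n} \bigl(\log f'(x_k) - \log(\phi(x_{k+1})/\phi(x_k))\bigr) \bigr|$.

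The key identification is that, since $\phi(x_k) = x_{k+1} - x_k$, one has
\[
\frac{\phi(x_{k+1})}{\phi(x_k)} = \frac{f(x_{k+1}) - f(x_k)}{x_{k+1} - x_k} = f'(\xi_k)
\]
for some $\xi_k \in (x_k, x_{k+1})$, by the mean value theorem applied to $f$. Each summand therefore becomes $|\log f'(x_k) - \log f'(\xi_k)|$. Using the lower bound $a = \min f' > 0$ together with the defining inequality of ${\rm Diff}^{\omega}_0[0,1]$, I would bound each such summand by
\[
\frac{1}{a}\,|f'(x_k) - f'(\xi_k)| \le C(f)\,\omega(|x_k - \xi_k|) \le C(f)\,\omega(\phi(x_k)),
\]
where the last inequality uses monotonicity of $\omega$ and $|x_k - \xi_k| \le \phi(x_k)$.

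Finally I would sum in $k$ and invoke concavity of $\omega$, which may be assumed thanks to Lemma~\ref{L:3}: by Jensen's inequality and the telescoping identity $\sum_{k=1}^n \phi(x_k) = x_{n+1} - x_1 \le 1$,
\[
\sum_{k=1}^{n} \omega(\phi(x_k)) \le n\,\omega\!\left( \frac{1}{n} \sum_{k=1}^{n} \phi(x_k) \right) \le n\,\omega(1/n),
\]
which yields the claimed bound $C(f)\cdot n \cdot \omega(1/n)$. The only real subtlety is the final averaging step: without concavity one could not trade the sum of $\omega(\phi(x_k))$ with varying arguments for a single value $\omega(1/n)$. This is precisely why the reduction to concave moduli via Lemma~\ref{L:3} was recorded at the beginning of the section; the remaining ingredients (chain rule, MVT, the lower bound $a>0$) are routine.
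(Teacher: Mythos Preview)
Your argument is correct and noticeably more economical than the paper's. Both proofs begin with the same telescoping
\[
\log (f^n)'(x_1)-\log\frac{\phi(x_{n+1})}{\phi(x_1)}=\sum_{k=1}^{n}\Bigl(\log f'(x_k)-\log\frac{\phi(x_{k+1})}{\phi(x_k)}\Bigr)
\]
and both finish with the Jensen step $\sum_k \omega(\phi(x_k))\le n\,\omega(1/n)$. The difference lies in how the individual summands are controlled. The paper rewrites $\log(\phi(x_{k+1})/\phi(x_k))=\int_{x_k}^{x_{k+1}}\phi'/\phi$, then linearises $\log(1+\phi'(x_k))$ via $|\log(1+y)-y|\le y^2/(1+y)$ and invokes Claim~\ref{Cl:2} (the pointwise bound $|\phi'(y)|\le 3\,\omega(\Omega(\phi(x)))$ on the interval $I_x$) together with the auxiliary inequality $\omega^2(\Omega(\phi(x)))\le\omega(\phi(x))$; several cancellations are needed before one arrives at $C(f)\,\omega(\phi(x_k))$. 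Your mean-value identification $\phi(x_{k+1})/\phi(x_k)=f'(\xi_k)$ short-circuits all of this: the summand becomes $|\log f'(x_k)-\log f'(\xi_k)|\le a^{-1}C(f)\,\omega(\phi(x_k))$ in one line, using nothing beyond the definition of $\mathrm{Diff}^{\omega}_0[0,1]$ and the lower bound $a=\min f'>0$. In particular your proof of this lemma is independent of Claim~\ref{Cl:2} and of the function $\Omega$; those devices remain necessary for Lemma~\ref{L:1} and the gluing at the end of Theorem~\ref{T:generalization}, but they contribute nothing essential here.
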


\begin{proof}[Proof of Lemma~\ref{L:2}]
We have
$$|\log{(f^n)'(x_1)} - \log{\frac{\phi(x_{n+1})}{\phi(x_1)}}|=
|\sum_{k=1}^{n}(\log(1+\phi'(x_k))-\log{\frac{\phi(x_{k+1})}{\phi(x_k)})}|$$
$$\leq
\sum_{k=1}^{n}|\int_{x_k}^{x_{k+1}}\frac{\phi'(t)}{\phi(t)}dt
-\log(1+\phi'(x_k))|. $$ The inequality $-\frac{y^2}{1+y} \leq
\log(1+y)-y <0 $, which is valid for all $y>-1,$ implies that
$|\log(1+y)-y| \leq \frac{y^2}{1+y}.$ In our context, we may use
both inequalities, since $\min_{x\in [0,1]}\phi'(x)>-1.$
$$\sum_{k=1}^{n}|\int_{x_k}^{x_{k+1}}\frac{\phi'(t)}{\phi(t)}dt-\log(1+\phi'(x_k))|\leq$$
$$\sum_{k=1}^{n}|\int_{x_k}^{x_{k+1}}\frac{\phi'(t)}{\phi(t)}dt-\phi'(x_k)|
+ \sum_{k=1}^{n}|\log(1+\phi'(x_k))-\phi'(x_k)|$$ $$\leq
\sum_{k=1}^{n}|\int_{x_k}^{x_{k+1}}\frac{\phi'(t)}{\phi(t)}dt-\phi'(x_k)|
+ \sum_{k=1}^{n}\frac{[\phi'(x_k)]^2}{1+\phi'(x_k)}$$
$$\leq \sum_{k=1}^{n}|\int_{x_k}^{x_{k+1}}\frac{\phi'(t)}{\phi(t)}dt-\phi'(x_k)| + \frac{1}{a} \cdot
\sum_{k=1}^{n}[\phi'(x_k)]^2.$$ Now we are going to estimate these
sums. For any $x \in [0,\delta],$ there exists $0<\theta<1$ such
that
$$\int_{x}^{x+\phi(x)}\frac{\phi'(t)}{\phi(t)}dt - \phi'(x)=\frac{\phi'(x+\theta \cdot \phi(x))}{\phi(x+\theta \cdot \phi(x))}\cdot\phi(x)-\phi'(x)=$$
$$=[\phi'(x+\theta\phi(x))-\phi'(x)]\cdot\frac{\phi(x)}{\phi(x+\theta\phi(x))} + \phi'(x)[\frac{\phi(x)}{\phi(x+\theta\phi(x))}-1].$$
By Claim ~\ref{Cl:1},
$$|[\phi'(x+\theta\phi(x))-\phi'(x)]\cdot\frac{\phi(x)}{\phi(x+\theta\phi(x))}|$$
$$\leq (|\phi'(x+\theta\phi(x))-\phi'(x)|)\cdot \max_{y \in
J_x}\frac{\phi(x)}{\phi(y)}$$ $$ \leq A \cdot \omega(\theta \cdot
\phi(x))  \leq A \cdot \omega(\phi(x)).$$ Then, there exists some
$0<\theta_1<1$, such that: $\phi(x+ \theta \phi(x))-\phi(x)= \theta
\phi(x) \cdot \phi'(x+ \theta \cdot \theta_{1} \phi(x)).$ Using it
together with Claim ~\ref{Cl:1}, we get
$$|\frac{\phi(x)}{\phi(x+\theta\phi(x))}-1|=|\frac{\phi(x)}{\phi(x)+\theta\phi(x)\phi'(x+\theta_{1}\theta\phi(x))}-1|=$$
$$= |\frac{1}{1+\theta\phi'(x+\theta_{1}\theta\phi(x))}-1| = |\frac{\theta\phi'(x+\theta_{1}\theta\phi(x))}{1+\theta\phi'(x+\theta_{1}\theta\phi(x))}|$$
$$ \leq \frac{1}{a} \cdot |\phi'(x+\theta_{1}\theta\phi(x))|
\leq \frac{3}{a} \cdot \omega(\Omega(\phi(x))).$$ Therefore
$$|\phi'(x) \cdot [\frac{\phi(x)}{\phi(x+\theta\phi(x)}-1]| \leq
\frac{3}{a} \cdot |\phi'(x)| \cdot \omega(\Omega(\phi(x)))$$
$$\leq \frac{9}{a} \cdot \omega^{2}(\Omega(\phi(x))).$$ Since $\Omega(x)
\geq x $, it follows that $\Omega(\phi(x)) \geq \phi(x).$
Additionally, $\frac{\omega(x)}{x}$ is decreasing, thus
$$\frac{\omega(\Omega(\phi(x)))}{\Omega(\phi(x))} \leq
\frac{\Omega(\phi(x))}{\phi(x)}.$$ The substitution of it yields
the following:
$$\omega^{2}(\Omega(\phi(x))) = \phi(x)
\cdot \frac{\omega(\Omega(\phi(x)))}{\Omega(\phi(x))} \leq \phi(x)
\cdot \frac{\omega(\phi(x))}{\phi(x)} =\omega(\phi(x)).$$ Adding
those results together, we have the following estimate:
$$|\int_{x}^{x+\phi(x)}\frac{\phi'(t)}{\phi(t)}dt - \phi'(x)| \leq
(A + \frac{9}{a}) \cdot \omega(\phi(x)). $$ Using the previous
estimate, we have also: $$|\phi'(x)|^2 \leq 9 \cdot
\omega^{2}(\Omega(\phi(x))) \leq 9 \cdot \omega(\phi(x)).$$ Let us
apply the above estimates for bounding our initial expressions:
$$\sum_{k=1}^{n}|\int_{x_k}^{x_{k+1}}\frac{\phi'(t)}{\phi(t)}dt-\phi'(x_k))|
+ C \cdot \sum_{k=1}^{n}[\phi'(x_k)]^2 $$ $$ \leq C(f) \cdot
\sum_{k=1}^{n}\omega(\phi(x_k)),$$ with $C(f)=10 + A + \frac{9}{a}.$
By Jensen's inequality
$$\sum_{k=1}^{n}\omega(\phi(x_k))= \sum_{k=1}^{n}\omega(x_{k+1}-x_k) \leq  n \cdot
\omega(\frac{1}{n}),$$ completing the proof of Lemma 3.
\end{proof}
Combining Lemmas 2 and 3, we get
\begin{cor} \label{C:2} Suppose that
$x_1,...,x_{n+1} \in (0,\delta).$ Then,
$$ (**) \ \ \ |\log{(f^n)'(x_1)}| \leq
\log{\frac{n}{\omega^{-1}(\frac{1}{n})}} + C(f) \cdot n \cdot
\omega(\frac{1}{n}).$$
\end{cor}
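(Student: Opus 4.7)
The plan is to deduce $(**)$ as a one-line triangle inequality bridging $\log(f^n)'(x_1)$ to the ratio $\log\bigl(\phi(x_{n+1})/\phi(x_1)\bigr)$, and then invoke the two preceding lemmas to control each piece separately. Specifically, I would write
$$|\log (f^n)'(x_1)| \leq \Bigl|\log (f^n)'(x_1) - \log \tfrac{\phi(x_{n+1})}{\phi(x_1)}\Bigr| + \Bigl|\log \tfrac{\phi(x_{n+1})}{\phi(x_1)}\Bigr|,$$
and then apply Lemma~\ref{L:2} to the first summand (which bounds it by $C(f)\,n\,\omega(1/n)$) and Lemma~\ref{L:1} to the second (which bounds it by $\log\bigl(n/\omega^{-1}(1/n)\bigr) + C(f,\omega)$). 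Both lemmas apply because the hypothesis $x_1,\dots,x_{n+1}\in(0,\delta)$ is exactly what they require.

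The only mildly nontrivial point is that Lemma~\ref{L:1} introduces an additive constant $C(f,\omega)$ which does not appear explicitly on the right-hand side of $(**)$. To absorb it, I use that we have already reduced (via Lemma~\ref{L:3}) to the case of a concave modulus of continuity with $\omega(0)=0$. Concavity together with $\omega(0)=0$ yields $\omega(1/n) \geq \omega(1)/n$ for every $n\ge 1$, so $n\,\omega(1/n)\geq \omega(1) > 0$. Hence $C(f,\omega)$ is at most a constant multiple of $n\,\omega(1/n)$, and it can be absorbed by enlarging the $C(f)$ appearing in the second term on the right-hand side of $(**)$. I do not anticipate any genuine obstacle: the content of the corollary is entirely routine bookkeeping once Lemmas~\ref{L:1} and~\ref{L:2} are available.
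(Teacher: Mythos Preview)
Your proposal is correct and matches the paper's approach exactly: the paper simply says ``Combining Lemmas 2 and 3, we get'' Corollary~\ref{C:2}, which is precisely the triangle-inequality argument you describe. Your remark about absorbing the additive constant $C(f,\omega)$ into the $C(f)\,n\,\omega(1/n)$ term via concavity is a valid elaboration that the paper leaves implicit.
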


At last, we turn to the details of the proof of Theorem 4, we
shall show that estimate $(**)$ holds for each $x \in (0,1)$.
Consider the decomposition of the interval into a union of open
intervals $[0,1] \setminus $Fix$(f)$ = $\cup_{i \in I}(a_i,b_i).$
Let $x \in (0,1)$ be an arbitrary point, then $x \in (a_i,b_i)$
for some $i \in I.$ If $|b_i - a_i | \leq \delta,$ then the
proof is complete by Corollary 2. \\
There are only finitely many intervals such that $|b_i - a_i | >
\delta.$ We take one of them and divide it into 3 subintervals:
$$ [a_i,b_i] = [a_i,a_i + \delta_0 ] \cup [a_i + \delta_0 ,b_i - \delta_0]  \cup [b_i - \delta_0, b_i], $$
when $\delta_0 \leq \delta$ and $\Omega(\phi(x)) \in [a_i,b_i - \delta_0]$ for all $x \in [a_i,a_i + \delta_0].$ We denote by $n_1,n_2,n_3$ the length of the trajectory of the sequence $(x_n)$ in each of the 3 subintervals respectively.\\
It is evident that $n_2$ is bounded by some constant $N(f)$.  If
$n_3=0$ or $n_1=0$, then we are done due to Corollary 2.
Otherwise, $n = n_1 + n_2 + n_3, $

$$|\log{(f^n)'(x_1)}| \leq |\log{(f^{n_2})'(x_{n_1+1})}| + |\log{(f^{n_1})'(x_1)} +
\log{(f^{n_3})'(x_{n_1+n_2+1})}|,
$$ we continue using Lemma 2,
$$ \leq N(f) \cdot C(f) + |\log{\frac{\phi(x_{n_1}) \cdot \phi(x_n) }{\phi(x_1) \cdot \phi(x_{n_1+n_2+1})}}| + C(f)n_{1}\omega(\frac{1}{n_1}) + C(f)n_{3}\omega(\frac{1}{n_3}).$$
Note that
$$  C(f) \cdot n_{1}\omega(\frac{1}{n_1}) + C(f) \cdot n_{3}\omega(\frac{1}{n_3}) \leq 2 C(f) n \cdot \omega(\frac{1}{n}). $$
Moreover, we have the following estimate:
$$| \log{\frac{\phi(x_{n_1})}{\phi(x_{n_1+n_2+1})}}| \leq c_{i} = \max_{z \in [f^{-1}(a_{i}+\delta_0),a_{i}+\delta_0], w \in [f^{-1}(b_{i}-\delta_0),b_{i}-\delta_0] } |\log{\frac{\phi(z)}{\phi(w)}}|.$$
Now we are going to find an upper bound for $|\log{\frac{\phi(x_n)
}{\phi(x_1)}}|$. As before, we split into two cases:

a. $\phi(x_n)
> \phi(x_1).$ By using Claim 1 and the choice of $\delta_0,$ we have $$n \geq a \cdot
\int_{x_1}^{x_{n_1+n_2}}\frac{dt}{\phi(t)}$$ $$ \geq a \cdot
\int_{x_1}^{x_1+ \Omega(\phi(x_1))}\frac{dt}{\phi(t)}
 \geq \frac{a}{2.5} \cdot \frac{\Omega(\phi(x_1))}{\phi(x_1)},$$
the last inequality is due to Claim 2. Thus by Claim ~\ref{Cl:3},
we have
$$|\log{\frac{\phi(x_n) }{\phi(x_1)}}| \leq \log{\frac{1}{\phi(x_1)}} \leq C(f) \cdot
\frac{n}{\omega^{-1}(\frac{1}{n})}.$$ \\

b. $\phi(x_n) < \phi(x_1).$ Then, $$n \geq n_2 + n_3  \geq a \cdot
\int_{x_{n_1}}^{x_n}\frac{dt}{\phi(t)} $$ $$ \geq a \cdot \int_{x_n
- \Omega(\phi(x_n))}^{x_n}\frac{dt}{\phi(t)}
 \geq \frac{a}{2.5} \cdot \frac{\Omega(\phi(x_n))}{\phi(x_n)}.$$
In the same way, by Claim 3 it follows that
$$|\log{\frac{\phi(x_n) }{\phi(x_1)}}| \leq \log{\frac{1}{\phi(x_n)}} \leq C(f) \cdot
\frac{n}{\omega^{-1}(\frac{1}{n})}.$$

\end{proof}

\begin{proof}[Proof of Theorem~\ref{T:low smoothness}]
Without limiting the generality, we assume that $\omega(x)$ is a
$C^1$ smooth concave function. We assume that by $f(x) = x - \phi(x)
> 0.$ By Lemma 2,
$$\log(f^n)'(x_1) \leq \log{\frac{\phi(x_1)}{\phi(x_{n})}} + C n \omega(\frac{1}{n}).$$
Therefore, it is sufficient to show that there exists a constant $ C
> 0 $, such that for every $ n \geqslant 1 $ we have  $ \phi ( x_n)
\geqslant e^{ - C n \omega (\frac{1}{n})
 } .$ \\
 The proof is by induction. We shall determine the value of $C>0$
during the proof. Take big enough $n$, and suppose that we have $
\phi ( x_{n-1}) \geqslant e^{ - C (n-1) \omega (\frac{1}{n-1}) }
  $. We wish to prove that $ \phi ( x_n) \geqslant e^{ - C n \omega (\frac{1}{n})
 } $. Assume in a counter that $$ \phi ( x_n) < e^{ - C n \omega (\frac{1}{n})
 }. $$  Let us show that in this case we must have
$$ \phi'(t) \leqslant 3 \cdot \omega ( \frac{1}{n} ), $$ for any $ t \in
[x_n , x_{n-1} ] $.

Assume in a counter that we have $$
 \phi'(t) > 3 \omega ( \frac{1}{n} ), $$
for some  $ t \in [x_n , x_{n-1} ] .$ Note that, $$ \phi (x_{n-1}
 ) \leqslant \phi (x_{n} ) + (x_{n-1} - x_n ) \max_{ s \in
 [x_n,x_{n-1}] } \phi'(s) $$ $$\leqslant \phi (x_{n} ) + (x_{n-1} - x_n
 )\omega ( x_{n-1} - x_{n} ) = \phi (x_{n} ) + \phi (x_{n-1} )\omega ( \phi (x_{n-1}
 )),$$ hence $ \phi (x_{n-1}) \leqslant \frac{ \phi (x_{n} ) } { 1
 - \omega ( \phi (x_{n-1})) } $. Therefore, for  big enough $ n $ we
 have,  $$ \phi (x_{n-1}) \leqslant 2  \phi (x_{n} )  < 2 e^{ - C n \omega (\frac{1}{n})
 } < \frac{1}{n}, $$
the last inequality is satisfied since
$\frac{\omega(x)}{x^{\alpha}}$ is decreasing for small $x$ and $0 <
\alpha < 1.$ Thus,  for big enough $n$, we have $\phi(x_{n-1}) =
x_{n-1} -
 x_{n} < \frac{1}{n} $. We have $ \phi'(t) > 3 \omega ( \frac{1}{n} )
 $, hence $$ \phi'(x_{n}) \geqslant \phi'(t) - \omega(t - x_{n})
 \geqslant \phi'(t) - \omega(x_{n-1} - x_{n}) $$ $$\geqslant  \phi'(t) -
 \omega(\frac{1}{n}) > 2 \omega(\frac{1}{n}) .$$ In
 particular, $ \omega(x_{n}) \geqslant \phi'(x_{n}) > 2 \omega(\frac{1}{n}) > \omega(\frac{1}{n})
 $, hence $ x_n > \frac{1}{n} $. For any $ s \in [ x_n -
 \frac{1}{n} , x_n ] ,$ we have $$ \phi'(s) \geqslant \phi'(x_{n}) -
 \omega(\frac{1}{n}) >  \omega(\frac{1}{n}). $$ Therefore, by the mean value theorem $$
 \phi(x_{n}) \geqslant \phi(x_n - \frac{1}{n}) + \frac{1}{n}
 \omega(\frac{1}{n}) \geqslant \frac{1}{n}
 \omega(\frac{1}{n}) .$$ On the other hand, we have assumed that $ \phi ( x_n) < e^{ - C n \omega (\frac{1}{n})
 } $ and thus $$ \frac{1}{n} \cdot \omega(\frac{1}{n}) < e^{ - C n \omega
 (\frac{1}{n})}.$$ Now, for any $0< \alpha < 1$ and big $n$, observe
 that
$$ (C_1)^{n^{\alpha} } \cdot \omega(\frac{1}{n}) \leq n $$
where the constant $C_1$ is an increasing function of C. Now, for
any $0< \beta < 1$ and big enough $n$, we have
$$ C_2 \cdot (C_1)^{n^{\alpha}}  <   n^{1 + \beta},$$
that is a contradiction. Therefore, we have proved that $
 \phi'(t) \leqslant 3 \omega ( \frac{1}{n} ) $, for any $ t \in [x_n , x_{n-1} ]
 $.

Notice that, $ n^{\alpha} \omega( \frac{1}{n} ) \geqslant
(n-1)^{\alpha} \omega( \frac{1}{n-1} ) $. Choose some $ \alpha <
\beta < 1 $. For big enough $ n $, we have $$ (1 +
\frac{\beta}{n-1}) \omega( \frac{1}{n} ) > ( 1 +
\frac{1}{n-1})^{\alpha} \omega( \frac{1}{n} )$$ $$=
\frac{1}{(n-1)^{\alpha}} n^{\alpha} \omega(\frac{1}{n}) \geqslant
\omega( \frac{1}{n-1} ), $$ hence, $$ n \omega( \frac{1}{n} ) -
(n-1) \omega( \frac{1}{n-1} ) \geqslant ( 1 - \beta ) \omega(
\frac{1}{n} ).$$  Therefore, we conclude that $$ e^{ C n \omega(
\frac{1}{n} ) - C (n-1) \omega( \frac{1}{n-1} ) }
 > 1 + C n \omega( \frac{1}{n} ) - C (n-1) \omega( \frac{1}{n-1}
) $$ $$> 1 + (1 - \beta) C \omega( \frac{1}{n} ), $$ that is, $$ e^{
- C (n-1) \omega( \frac{1}{n-1} ) } - e^{ - C n \omega( \frac{1}{n}
) }
> ( 1- \beta) C \omega(\frac{1}{n}) e^{ - C n \omega( \frac{1}{n} )
} .$$ Finally, recall that for big $n$, $\phi(x_n) \geq 2
\phi(x_{n-1})$, and by the initial assumption that $\phi(x_n) <
e^{-C n \omega(\frac{1}{n})}$ and $\phi(x_{n-1}) \geq e^{-C (n-1)
\omega(\frac{1}{n-1})}$ we obtain

$$ ( 1- \beta) C \omega(\frac{1}{n}) e^{ - C n \omega( \frac{1}{n} )
} <  e^{ - C (n-1) \omega( \frac{1}{n-1} ) } - e^{ - C n \omega(
\frac{1}{n} ) } < \phi(x_{n-1}) - \phi(x_n) $$ $$\leqslant (x_{n-1}
- x_n) \max_{ s \in [x_n,x_{n-1}] } \phi'(s) \leqslant 3
\phi(x_{n-1}) \omega( \frac{1}{n} ) \leqslant 6 \phi(x_{n}) \omega(
\frac{1}{n} ) < $$ $$6 e^{ - C n \omega (\frac{1}{n}) } \omega(
\frac{1}{n} ) .$$ This inequality is surely false for big enough
$C$.

\end{proof}

\section{Sharpness: proofs of theorems 6 and 7} \label{S:proofs2}

\begin{proof}[Proof of Theorem~\ref{T:sharpness1}]
Define $\phi(x)=\int_{0}^{x}\omega(t)dt$ and $f(x)=x-\phi(x),$ in
some interval $[0,\varepsilon].$ Extend $f(x)$ arbitrarily
$C^{\infty}$-smoothly to the whole interval $[0,1]$ in such way
that $f(1)=f'(1)=1.$ We work in the interval $[0,\varepsilon].$ By
Lemma 3,
$$ ( \triangle) \ \ \ \log(\frac{1}{\phi(x_n)}) - C \cdot n \cdot
\omega(\frac{1}{n}) \leq \log(f^n)'(x_1).$$ Now, we estimate from
below the left hand side of $(\triangle)$.
\begin{clm} \label{Cl:9}
$$ \omega(x_n) \leq \frac{C}{n}$$
\end{clm}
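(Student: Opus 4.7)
My plan is to sandwich the integral $\int_{x_n}^{x_1}\frac{dt}{\phi(t)}$ between two quantities: a lower bound of order $n$, coming from the iteration, and an upper bound of order $1/\omega(x_n)$, coming from the regularity hypothesis on $\omega$. Comparing them will force $\omega(x_n)\leq C/n$.

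For the lower bound I invoke (the obvious analogue of) Claim~\ref{Cl:1}(b); equivalently, since $\phi$ is non-decreasing and $x_k-x_{k+1}=\phi(x_k)$, one has $\phi(t)\leq\phi(x_k)$ on $[x_{k+1},x_k]$, whence $\int_{x_{k+1}}^{x_k}\frac{dt}{\phi(t)}\geq 1$; summing over $k=1,\ldots,n-1$ gives $\int_{x_n}^{x_1}\frac{dt}{\phi(t)}\geq n-1$.

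For the upper bound I first appeal to Lemma~\ref{L:3} and replace $\omega$ by an equivalent concave modulus. The chord inequality $\omega(s)\geq(s/t)\omega(t)$ for $s\in[0,t]$ then yields $\phi(t)\geq\tfrac{1}{2}\,t\omega(t)$. Next I bring in the hypothesis of Theorem~\ref{T:sharpness1}: after shrinking $\varepsilon$ if necessary, I may assume $x_1\leq a(\alpha)$ for a fixed $\alpha\in(0,1)$ (say $\alpha=\tfrac12$), so that the monotonicity of $\omega(t)/t^\alpha$ gives $\omega(t)\geq(t/x_n)^\alpha\omega(x_n)$ on $[x_n,x_1]$. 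Putting these together,
\[
\int_{x_n}^{x_1}\frac{dt}{\phi(t)}\;\leq\;\int_{x_n}^{x_1}\frac{2\,dt}{t\omega(t)}\;\leq\;\frac{2x_n^\alpha}{\omega(x_n)}\int_{x_n}^{x_1}\frac{dt}{t^{1+\alpha}}\;\leq\;\frac{2}{\alpha\,\omega(x_n)}.
\]
Comparing this with the lower bound yields $n-1\leq 2/(\alpha\omega(x_n))$, i.e.\ the required estimate $\omega(x_n)\leq C/n$ (absorbing the case of small $n$ into the constant).

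The delicate point is the upper-bound computation. A naive estimate using only the monotonicity $\omega(t)\geq\omega(x_n)$ would introduce a parasitic factor $\log(x_1/x_n)$, which — since $x_n$ may decay very rapidly — would be too large and give only the weaker bound $\omega(x_n)\leq C\log n/n$. The hypothesis that $\omega(t)/t^\alpha$ is increasing is precisely the extra control needed to concentrate the mass of the integral $\int dt/(t\omega(t))$ near $t=x_n$ and thereby kill the unwanted logarithm.
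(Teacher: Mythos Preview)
Your argument is correct and takes a genuinely different route from the paper's. The paper proves Claim~\ref{Cl:9} by a direct induction on $n$: assuming $x_{n-1}\le\omega^{-1}(C/(n-1))$, it verifies $\omega^{-1}(C/n)\ge f\big(\omega^{-1}(C/(n-1))\big)$ by a mean--value step, which after the substitution $x=\omega^{-1}(C/n)$ reduces to the differential inequality $\omega'(x)\ge\tfrac{4}{C}\,\omega(x)/x$, i.e.\ $(\log(\omega(x)/x^{4/C}))'\ge 0$; this is exactly the hypothesis that $\omega(x)/x^{\alpha}$ is increasing with $\alpha=4/C$. Your approach instead compares a lower and an upper bound for the ``time integral'' $\int_{x_n}^{x_1}dt/\phi(t)$: the lower bound $\ge n-1$ is immediate from the orbit relation, and the upper bound $\le C/\omega(x_n)$ comes from $\phi(t)\ge c\,t\omega(t)$ together with the same increasing hypothesis on $\omega(t)/t^{\alpha}$, now used as a pointwise inequality $\omega(t)\ge(t/x_n)^{\alpha}\omega(x_n)$ rather than via $\omega'$. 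Your method is arguably cleaner: it avoids the inductive bookkeeping, never invokes $\omega'$ (so no implicit smoothness of $\omega$ is needed), and makes transparent why the naive bound $\omega(t)\ge\omega(x_n)$ loses a logarithm while the increasing--ratio hypothesis saves it. One small wrinkle: your appeal to Lemma~\ref{L:3} to ``replace $\omega$ by an equivalent concave modulus'' is slightly off, since $\phi$ is built from the \emph{given} $\omega$, not the concave majorant; but subadditivity alone already yields $\phi(t)=\int_0^t\omega\ge\int_{t/2}^t\omega\ge\tfrac{t}{2}\cdot\tfrac{\omega(t)}{2}=\tfrac{1}{4}t\omega(t)$, which is all you need (and is precisely the estimate the paper uses).
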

\begin{proof}
We shall do it by induction. Assume that we have proved the claim
for $n-1$, namely, $$ x_{n-1} \leq \omega^{-1}(\frac{C}{n-1}).$$
Since $f$ is monotonic, it suffices to verify
$$\omega^{-1} \big(\frac{C}{n} \big) \geq f(\omega^{-1}(\frac{C}{n-1})) \geq f(x_{n-1})=x_n.$$
The last inequality is equivalent to
$$\omega^{-1}(\frac{C}{n}) + \phi(\omega^{-1}(\frac{C}{n-1})) \geq \omega^{-1}(\frac{C}{n-1}),$$
By the Mean Value theorem and the assumptions that $\omega(x)$ is
concave and monotonic and $\phi(x)$ is monotonic, it is enough to
show that
$$ \phi(\omega^{-1}(\frac{C}{n})) \geq \frac{C}{n^2 \cdot \omega'(\omega^{-1}(\frac{C}{n})},$$
denote $x=\omega^{-1}(\frac{C}{n})$ and observe
$$\phi(x) \geq \frac{\omega^{2}(x)}{C \cdot \omega'(x)}.$$
Now, recall that $\phi(x) \geq \frac{x}{4} \cdot \omega(x)$ and
consider
$$\omega'(x) \geq \frac{C}{4} \cdot \frac{\omega(x)}{x},$$
this inequality is equivalent to
$$\big( \ln{\frac{\omega(x)}{x^{\frac{4}{C}}}} \big)' \geq 0$$
this holds since we know that $\frac{\omega(x)}{x^{\alpha}}$
increases for all $0 < \alpha < 1$ on the corresponding intervals
$(0,a(\alpha)).$

\end{proof}
Recall that $\phi(x) \leq x \omega(x)$. Due to Claim 9 and the
monotonicity of $\omega^{-1},$ we have
$$\phi(x_n) \leq x_n \omega(x_n) \leq \frac{a}{n} \cdot \omega^{-1}(\frac{a}{n}).$$
Therefore,
$$ \frac{n}{a \cdot \omega^{-1}(\frac{a}{n})}\leq \frac{1}{\phi(x_n)}.$$
Substituting it into $(\triangle),$ we have

$$ \log{ \frac{n}{a \cdot \omega^{-1}(\frac{a}{n})} } - C \cdot n \cdot
\omega(\frac{1}{n}) \leq \log(f^n)'(x_1). $$ Consider any
$\varepsilon>0$, let us check that
$$ (1-\varepsilon) \cdot \log{\frac{n}{\omega^{-1}(\frac{c}{n})}} \leq \log(\frac{n}{a \cdot \omega^{-1}(\frac{a}{n})}) - C \cdot n \cdot \omega(\frac{1}{n}),$$
when $n \rightarrow \infty.$ That is equivalent to
$$ \frac{C n \omega(\frac{1}{n})}{\log{\frac{n}{a \omega^{-1}(\frac{a}{n})}}} \leq \varepsilon,$$
as $n \rightarrow \infty.$ Indeed,
$$ \frac{C n \omega(\frac{1}{n})}{\log{\frac{n}{a
\omega^{-1}(\frac{a}{n})}}} \leq \frac{C n
\omega(\frac{1}{n})}{\log{\frac{n^2}{a^2}}} \leq \frac{C}{2} \cdot
\frac{\omega(\frac{1}{n})}{\frac{\log{n}}{n}} \rightarrow 0,
$$ here we used $\omega^{-1}(x) \leq x$ and that $\lim_{x \rightarrow 0} \frac{\omega(x)}{x \log{\frac{1}{x}}}=0.$ It completes the proof of Theorem 6.
\end{proof}

\begin{proof}[Proof of Theorem~\ref{T:sharpness2}]

The proof is based on the construction presented in $[{\rm B}]$.
Let $0<\varepsilon<1$ be an arbitrary number, define
$$ \phi_{\varepsilon}(x)=x - (1+\frac{1}{x})^{-1}-x^{2+\varepsilon}
\cdot \omega(x) \cdot \sin(\frac{2 \pi}{x}) $$
$$  f_{\varepsilon}(x)= x - \phi_{\varepsilon}(x) $$
on some interval $[0,a(\varepsilon)].$ Note that $
f_{\varepsilon}(0)=0,  f'_{\varepsilon}(0)=1$ and for $0< k^{-1}
<a(\varepsilon)$, $f_{\varepsilon}(k^{-1})=(k+1)^{-1}$. It is
possible to choose $a(\varepsilon)$ in a way that \\ 1.
$f'_{\varepsilon}(x)>0$ for all $x \in [0,a(\varepsilon)].$ \\ 2.
$f_{\varepsilon}(x)$ does not admit any fixed points in $(0,a(\varepsilon)].$ \\
3. The following inequality is satisfied
$$|f'_{\varepsilon}(x)-f'_{\varepsilon}(y)| \leq C \cdot
\omega(|x-y|),$$ for all $x,y \in [0,a(\varepsilon)],$ where $C$ is an absolute constant, which does not depend on $\varepsilon.$\\
Then, for $0< k^{-1} <a(\varepsilon)$,
$$\log(f_{\varepsilon}^{N})'(k^{-1})=\sum_{j=0}^{N-1} \log{f'_{\varepsilon}(\frac{1}{k+j})}$$
$$\geq \sum_{j=0}^{N-1}\log(((k+j)^{-2}+1)^{-2} + (k+j)^{-\varepsilon} \cdot \omega((k+j)^{-1} ))$$ $$ \geq c'(\varepsilon) \cdot N \cdot (k+N-1)^{-\varepsilon} \cdot \omega((k+N-1)^{-1} ))$$
$$ \geq c(\varepsilon) \cdot N^{1-\varepsilon} \cdot \omega(N^{-1}),$$
as $N \rightarrow \infty.$ \\
We are going to construct a diffeomorphism $f \in {\rm
Diff}_{0}^{\omega}[0,1],$ which will be composed of a suitable
pasting of the frame functions $f_{\varepsilon}.$ \\
Let $\{ \varepsilon_k \}_{k \in \mathbb{N}}$ be an arbitrary
monotonically decreasing sequence of real numbers which tends to 0.
Pick two sequences $\{ a_k \}_{k \in \mathbb{N}}$, $\{ b_k \}_{k \in
\mathbb{N}}$ monotonically decreasing sequences of real numbers
which tend to 0, such that $a_k>b_{k+1},$ for all $k \in
\mathbb{N}.$
Define now \\
\[\widetilde{f}_{\varepsilon_k}(x)=
   \left\{\begin{array}{ll}
            f_{\varepsilon_k}(x-a_k), & \mbox{$x \in [a_k,a_k + a(\varepsilon_k)]$} \\
            \Psi_{k}(x), & \mbox{$x \in [a_k + a(\varepsilon_k), b_k]$} \\
          \end{array}
   \right. \]
where $\Psi_{k}(x)$ is a monotonic $C^{\infty}$-continuation of
$f_{\varepsilon_k}(x-a_k)$ to the whole interval $[a_k,b_k],$
without fixed points on the interval $[a_k + a(\varepsilon_k),
b_k]$ with the property $\Psi_{k}(b_k)=b_k, \Psi'_{k}(b_k)=1,$ and
with bounded second derivative $|\Psi_{k}''(x)|<1.$
Define \\
\[f(x)=
   \left\{\begin{array}{ll}
            \widetilde{f}_{\varepsilon_k}(x), & \mbox{$x \in [a_k,b_k]$} \\
            x, & \mbox{$x \in [0,1]\setminus \cup_{k \in \mathbb{N}}[a_k,b_k]$} \\
          \end{array}
   \right. \]

Since $\Psi_{k}(x)$ is $C^{\infty}$ with second bounded
derivative, it is not hard see that $|f'(x)-f'(y)| \leq C(f)
\omega(|x-y|),$
for all $x,y \in [0,1].$ \\
Now, choose an arbitrary $\varepsilon >0,$ there exists
$\varepsilon_k < \varepsilon.$ Pick any $m^{-1} <
a(\varepsilon_k).$ Thus, we have
$$ \log{\Gamma_{N}(f)} \geq \log{(f^N)'(a_k+m^{-1})} = \log{(f_{\varepsilon_k}^N)'(m^{-1})} $$
$$\geq c(\varepsilon_k) \cdot N^{1-\varepsilon_k} \omega(N^{-1}) \geq c(\varepsilon) \cdot N^{1-\varepsilon} \omega(N^{-1}), $$
for large $N$. Theorem 7 is proved.
\end{proof}
\textbf{Acknowledgments.} We would like to convey our gratitude
to Mikhail Sodin for his guidance, many helpful discussions and
for reading the preliminary version of this paper. We thank Leonid
Polterovich for his remarks and advises. We thank the anonymous
referee for useful remarks and simplification of the proof of
Theorem 5 and Claim 4.


\begin{thebibliography}{9999999}

  \bibitem[B]{B} A. Borichev, {\em
     Distortion growth for iterations of diffeomorphisms of the interval,} Geometric and Functional Analysis
14 (2004), 941-964
\\
 \bibitem[Dz.]{Dz.} V.K. Dzyadik , {\em Introduction to the theory of uniform approximation of function by polynomials
} (Russian), 1977, Nauka, Moscow, 1977, 146-201.
\\
  \bibitem[DG]{DG} G. D'Ambra and M. Gromov, {\em Lectures on transformation groups: geometry and dynamics,}
  in {\em Surveys in Differential Geometry, } supplement to J.
  Differential Geom., number 1, 1991, 19-112.
\\
  \bibitem[EF]{S-E}  A.V. Efimov,
  {\em Linear methods of approximating continuous periodic functions,} (Russian) Math. Sb. 54, 1961, 51-90.
   no. 1-2 , 59-88.
\\
 \bibitem[L]{L} Yu. Lyubich, {\em Mathematical Structures in Population Genetics,} Kiev,
  1983; English translation: Springer Verlag, Berlin-Heidelberg,
  1992.
\\
  \bibitem[PS]{PS} L. Polterovich and M. Sodin, {\em A Growth Gap for diffeomorphisms of the interval,}
  Journal d'Analyse Mathmatique, Vol. 92 (2004), 191-209.
\\
  \bibitem[W]{W} N. Watanabe, {\em Growth sequences for flat diffeomorphisms of the interval,}
  Nihonkai Math. J. 15 (2004), 137-140.




\end{thebibliography}
\end{document}